\theoremstyle{plain}
\newtheorem{Thm}{Theorem}[section]
\newtheorem{Cor}[Thm]{Corollary}
\newtheorem{Lem}[Thm]{Lemma}
\theoremstyle{definition}
\newtheorem*{Pb}{Problem}
\theoremstyle{remark}
\newtheorem{Rem}{Remark}[section]
\newtheorem{Ex}{Example}[section]
\numberwithin{equation}{section}
\begin{document}

\newcommand{\rx}{\mathcal{R}(X)}

\title{Graph colorings, flows\\ and\\ arithmetic Tutte polynomial}

\author{Michele D'Adderio$^*$}\thanks{$^*$ supported by the Max-Planck-Institut
f\"{u}r Mathematik and the University of
G\"{o}ttingen.}\author{Luca Moci$^{\dag}$}\thanks{$^{\dag}$
supported by Dipartimento di Matematica "Guido Castelnuovo"
(Roma).}
\address{Georg-August Universit\"{a}t G\"{o}ttingen\\ Mathematisches Institut\\
Bunsenstrasse 3-5, D-37073 G\"{o}ttingen\\
Germany}\email{mdadderio@yahoo.it}

\address{Dipartimento di Matematica "Guido Castelnuovo"\\Sapienza Universit\`a di Roma\\
Piazzale Aldo Moro 5, 00185 Roma\\
Italy}\email{moci@mat.uniroma1.it}

\begin{abstract}
We introduce the notions of arithmetic colorings and arithmetic
flows over a graph with labelled edges, which generalize the
notions of colorings and flows over a graph.

We show that the corresponding arithmetic chromatic polynomial and
arithmetic flow polynomial are given by suitable specializations
of the associated arithmetic Tutte polynomial, generalizing
classical results of Tutte \cite{Tu}.
\end{abstract}

\maketitle

\section*{Introduction}

It is well known how to associate a matroid, and hence a Tutte
polynomial, to a graph. Moreover, several combinatorial objects
associated to a graph are counted by suitable specializations of
this important invariant: (proper) $q$-colorings and (nowhere
zero) $q$-flows are two of the most classical examples. See
\cite{GR} and \cite{Wh} for systematic accounts.

In \cite{MoT} a new polynomial has been introduced, which provides
a natural counterpart for toric arrangements of the Tutte
polynomial of a hyperplane arrangement. In fact in \cite{MoT} and
\cite{DM} it has been shown how this polynomial has several
applications to toric arrangements, vector partition functions and
zonotopes.

In \cite{DM2} we introduced the notion of an \textit{arithmetic
matroid}, which generalizes the one of a matroid, and whose main
example is provided by a list of elements in a finitely generated
abelian group. To this object we associated an \textit{arithmetic
Tutte polynomial} (which is the polynomial in \cite{MoT} for the
main example) and provided a combinatorial interpretation of it
which extends the one given by Crapo for the classical Tutte
polynomial.

Encouraged by all these evidences (cf. also \cite{li} where two
parallel theories are developed), we think of the arithmetic
matroids and the arithmetic Tutte polynomial as natural
generalizations of their classical counterparts. So it seemed also
natural to us to look for applications in graph theory.

In this paper we introduce the notion of an \textit{arithmetic
(proper) $q$-coloring} and an \textit{arithmetic (nowhere zero)
$q$-flow} over a graph $(\mathcal{G},\ell)$ with labelled (by
integers) edges, which generalize the well known notions of
$q$-colorings and $q$-flows of a graph. We then associate to the
labelled graph an arithmetic matroid
$\mathfrak{M}_{\mathcal{G},\ell}$, and we show how suitable
specializations of its arithmetic Tutte polynomial provides the
\textit{arithmetic chromatic polynomial}
$\chi_{\mathcal{G},\ell}(q)$ and the \textit{arithmetic flow
polynomial} $\chi_{\mathcal{G},\ell}^*(q)$ of the labelled graph
(see\textbf{ Theorem \ref{thm:maincolor}} and \textbf{Theorem
\ref{thm:mainflow}}).

These can be seen as generalizations of the classical results of
Tutte \cite{Tu} (\textbf{Corollaries \ref{cor:color}} and
\textbf{\ref{cor:flow}}) that the chromatic polynomial and the
flow polynomial of a graph can be obtained as suitable
specializations of the corresponding Tutte polynomial.

The paper is organized in the following way.

In the first section we define the basic notions of graph theory
that we need, in particular the notion of \textit{labelled graph},
and we fix the corresponding notation.

In the second section we recall some basic notions of the theory
of arithmetic matroids. In particular we state some of their basic
properties, and we show how to associate to a labelled graph an
arithmetic matroid.

In the third section we introduce the notion of arithmetic
coloring and we state Theorem \ref{thm:maincolor}.

In the fourth section we prove Theorem \ref{thm:maincolor}.

In the fifth section we introduce the notion of arithmetic flow
and we state Theorem \ref{thm:mainflow}.

In the sixth section we prove Theorem \ref{thm:mainflow}.

In the last section we make some final comments and we formulate
an open problem.

\subsection*{Acknowledgments}

We would like to thank Petter Br\"{a}nd\'{e}n and Matthias Lenz
for interesting discussions.

\section{Labelled graphs}

In this paper a \textit{graph} $\mathcal{G}$ will be a pair
$(V,E)$, where $V$ is a finite set whose elements are called
\textit{vertices}, and $E$ is a finite multiset of 2-element
multisets of $V$, which are called \textit{edges}. A \textit{loop}
is an edge whose elements coincide.

\bigskip

\textit{In this paper we will always assume that our graphs have
no loops.}

\bigskip

\begin{Ex}
Consider $\mathcal{G}:=(V,E)$, where $V:=\{v_1,v_2,v_3,v_4\}$ is
the set of vertices and $E:=\{\{v_1,v_2\}$, $\{v_2,v_3\}$,
$\{v_2,v_4\}$, $\{v_3,v_4\}\}$ is the set of edges (see Figure 1).
\begin{figure}[h]
\includegraphics[width=60mm,clip=true,trim=10mm 180mm 30mm 10mm]{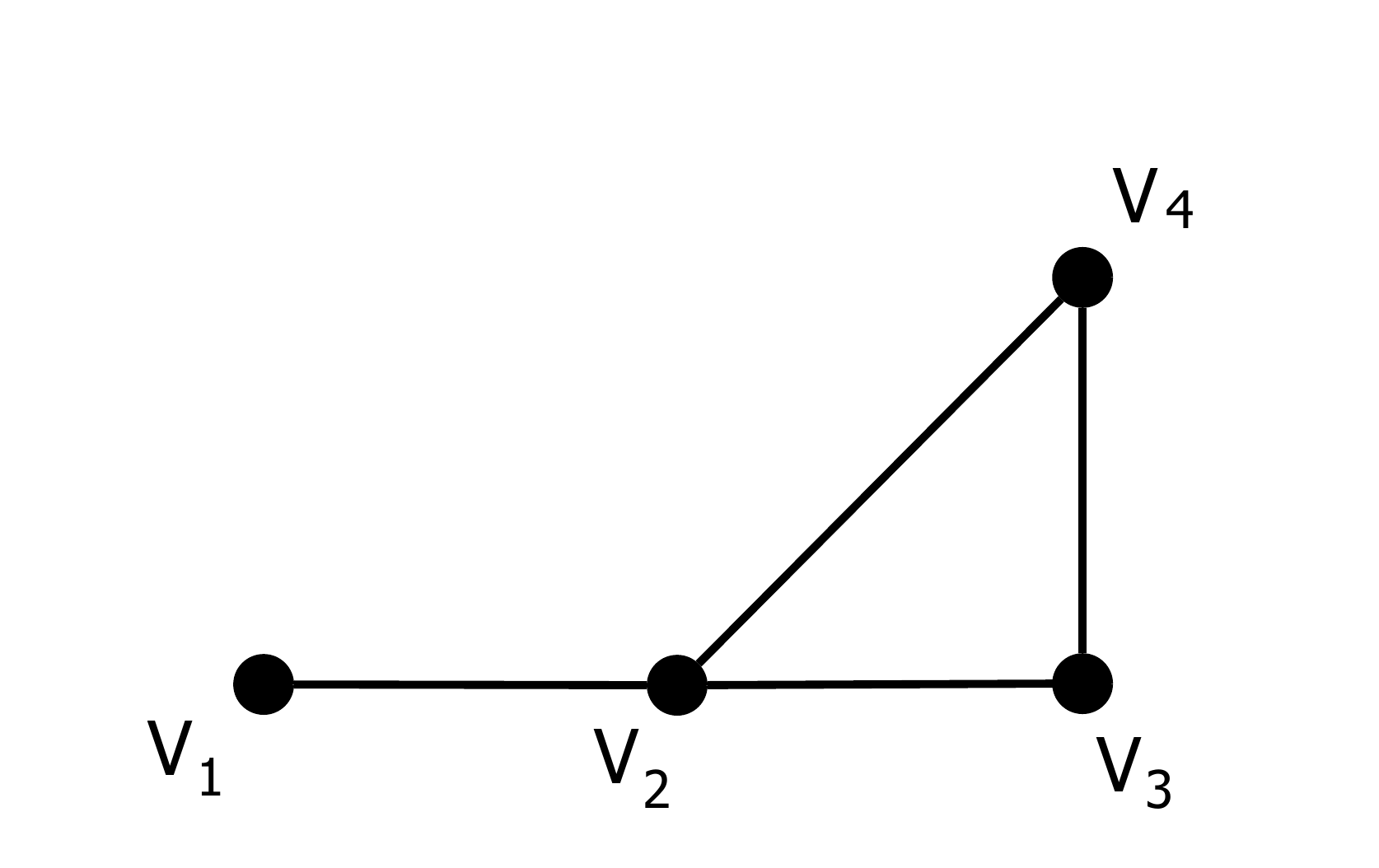}
\caption{The graph $\mathcal{G}$.}
\end{figure}
\end{Ex}

We define the \textit{classical deletion} of an edge $e$ of a
graph $\mathcal{G}=(V,E)$ to be simply the graph $(V,E\backslash
\{e\})$, i.e. the graph $\mathcal{G}$ with the edge $e$ removed.

We define the \textit{classical contraction} of an edge $e$ of a
graph $\mathcal{G}=(V,E)$ to be the graph $\mathcal{G}$ with the
edge $e$ removed and with the corresponding vertices identified.

\begin{Ex}
Let $\mathcal{G}:=(V,E)$, where $V:=\{v_1,v_2,v_3,v_4\}$ is the
set of vertices and $E:=\{\{v_1,v_2\}$, $\{v_2,v_3\}$,
$\{v_2,v_4\}$, $\{v_3,v_4\}\}$ is the set of edges. Let
$e:=\{v_2,v_3\}\in E$. Then the classical deletion of $e$ is the
graph with vertices $V$, and edges $E\setminus
e=\{\{v_1,v_2\},\{v_2,v_4\},\{v_3,v_4\}\}$, while the classical
contraction of $e$ is the graph with vertices
$V':=\{v_1',v_2',v_3'\}$ and edges $E':=\{\{v_1',v_2'\}$,
$\{v_2',v_3'\}$, $\{v_2',v_3'\}\}$ (see Figure 2).
\end{Ex}
\begin{figure}[h]
\subfigure[Classical deletion of
$e$]{\includegraphics[width=60mm,clip=true,trim=10mm 180mm 30mm
10mm]{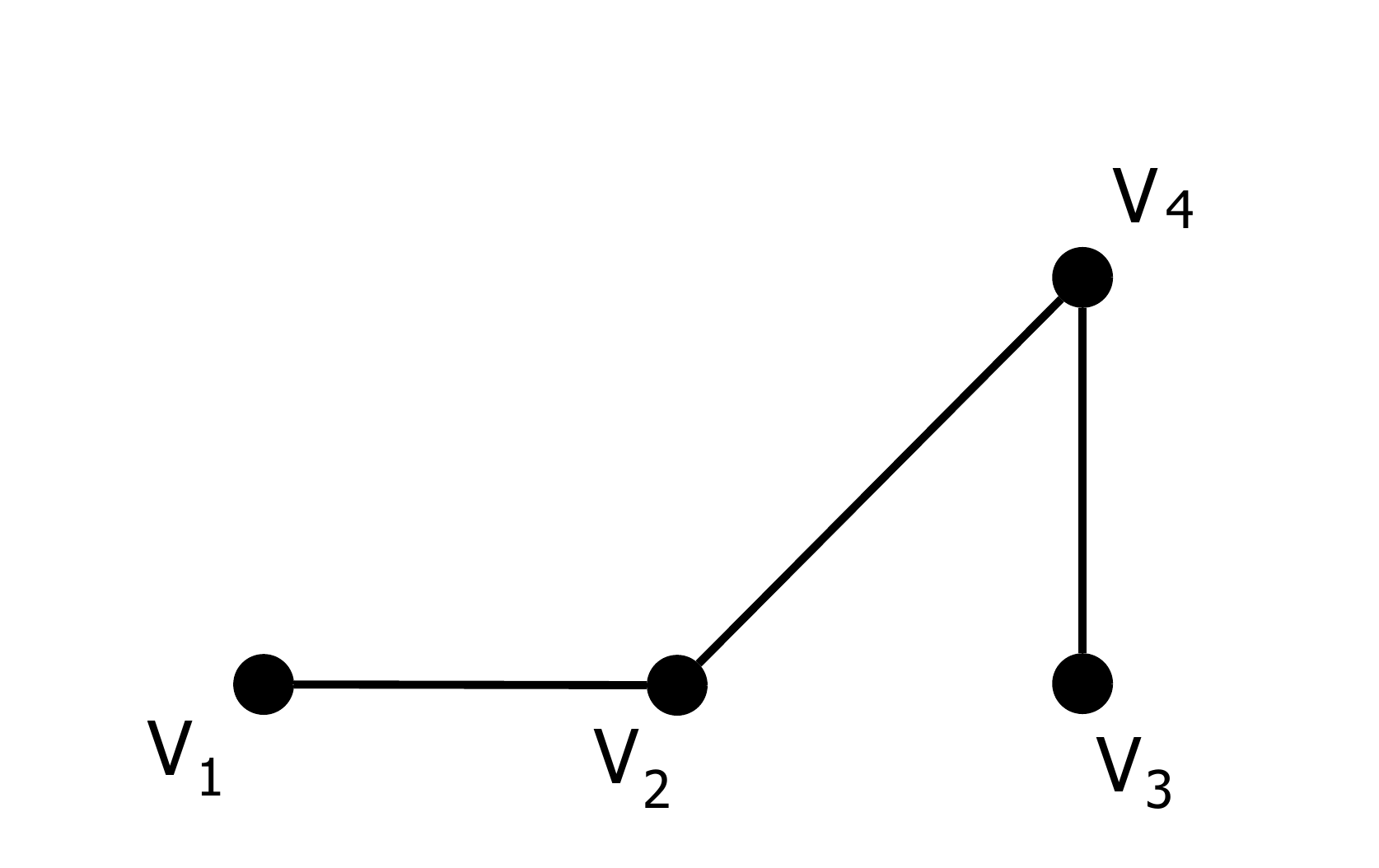}} \subfigure[Classical contraction of
$e$]{\includegraphics[width=60mm,clip=true,trim=10mm 180mm 30mm
10mm]{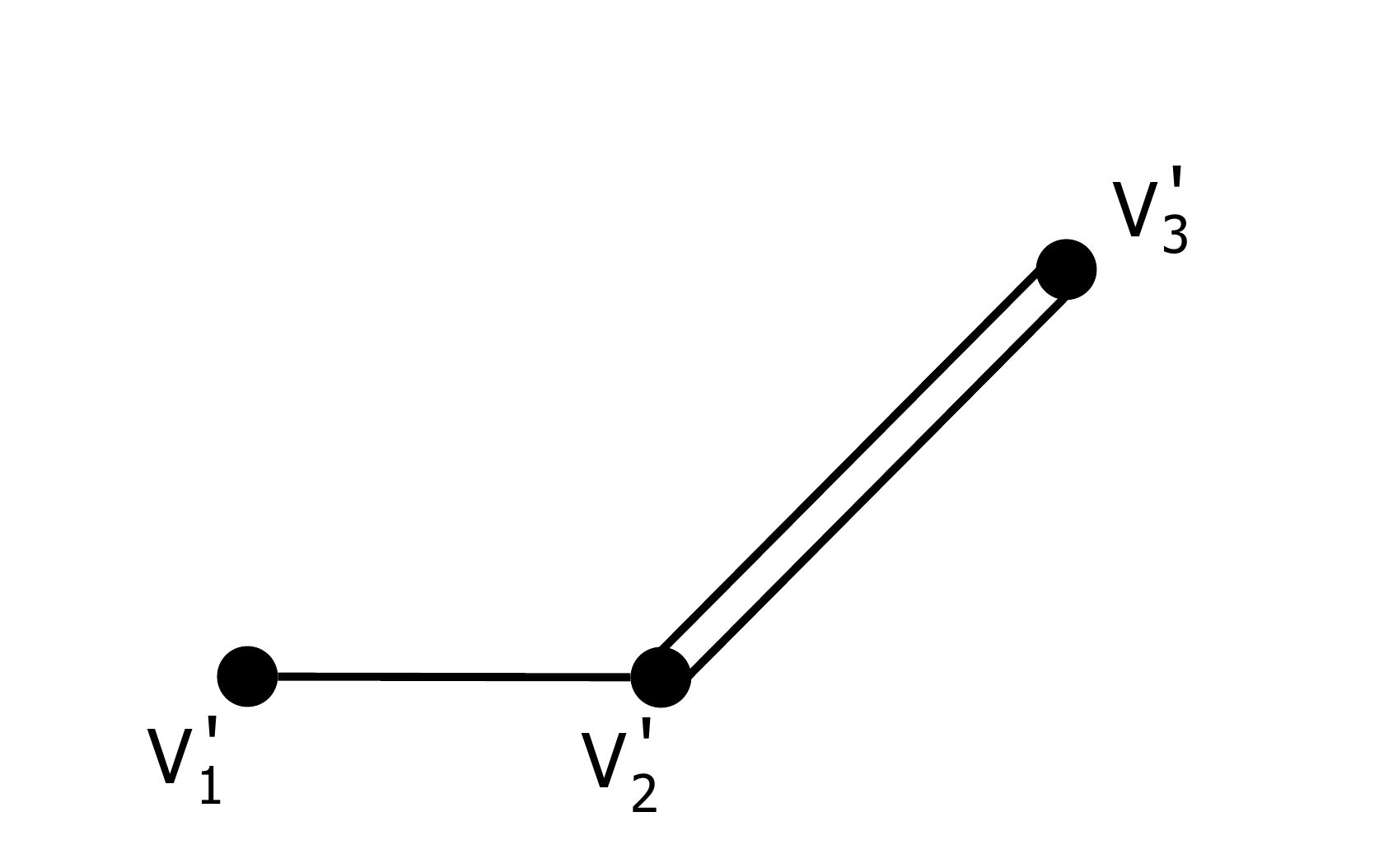}}

\caption{The classical deletion and contraction of $e$}
\end{figure}

We distinguish two kinds of edges: we assume that $E$ is a
disjoint union $E=R\cup D$, where we call the elements of $R$
\textit{regular} edges, while we call the elements of $D$
\textit{dotted} edges.

A \textit{labelled graph} in this contest will be simply a pair
$(\mathcal{G},\ell)$, where $\mathcal{G}=(V,E)$ is a graph, and
$\ell:E\rightarrow \mathbb{N}\setminus \{0\}$ is a map, whose
images are called \textit{labels} of the corresponding edges.

\begin{Ex} \label{ex:labelgraph}
Consider $(\mathcal{G},\ell)$, where $\mathcal{G}:=(V,E)$,
$V:=\{v_1,v_2,v_3,v_4\}$ is the set of vertices,
$R:=\{\{v_1,v_2\},\{v_2,v_3\},\{v_2,v_4\}\}$ the set of regular
edges, $D:=\{\{v_3,v_4\}\}$ the set of dotted edges, so that
$E=R\cup D=\{\{v_1,v_2\},\{v_2,v_3\},\{v_2,v_4\},\{v_3,v_4\}\}$.
Moreover let $\ell(\{v_1,v_2\})=1$, $\ell(\{v_2,v_3\})=2$,
$\ell(\{v_2,v_4\})=3$, $\ell(\{v_3,v_4\})=6$ be the labels of the
edges (see Figure 3).

\begin{figure}[h]
\includegraphics[width=60mm,clip=true,trim=10mm 180mm 30mm 10mm]{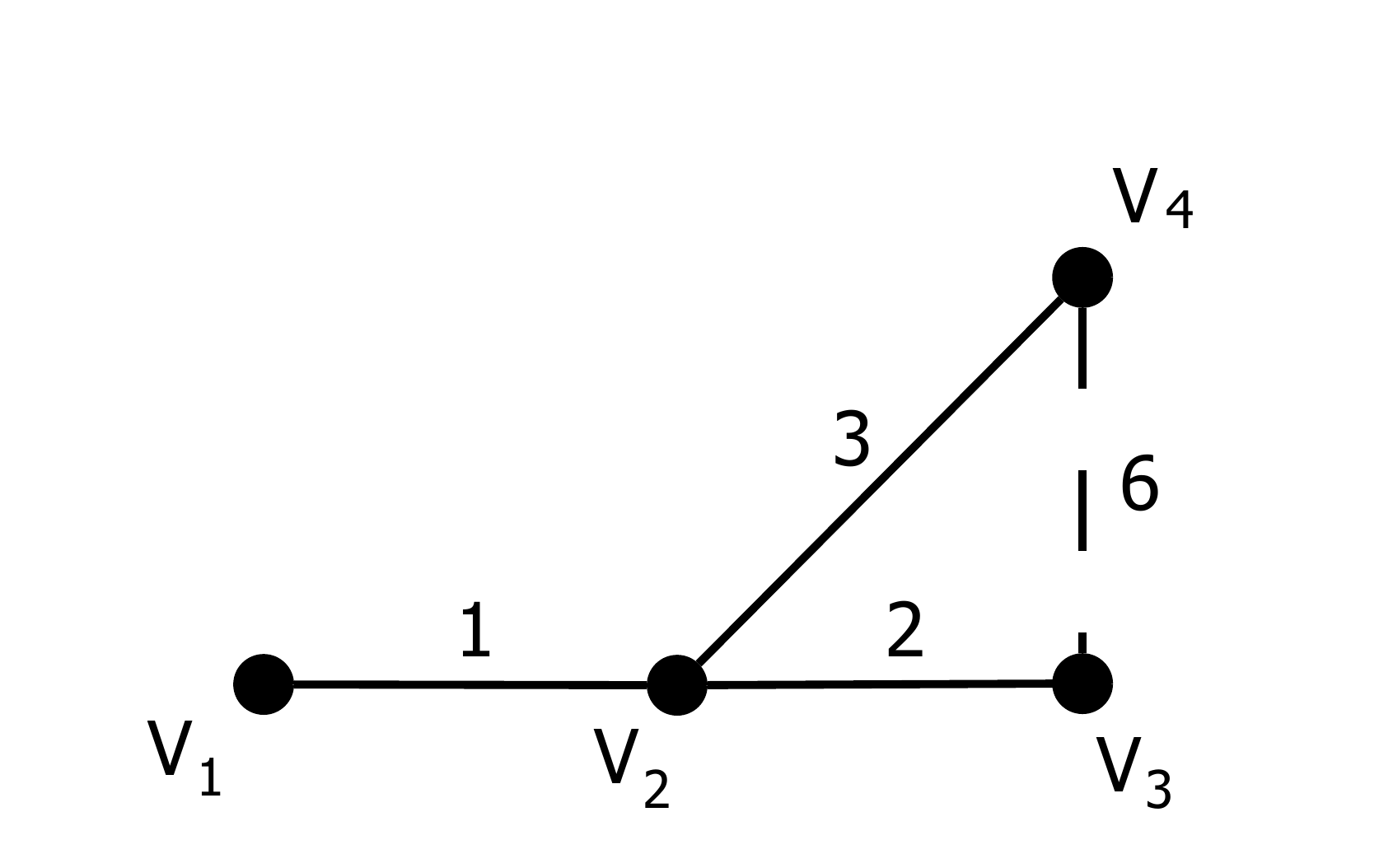}
\caption{The labelled graph $(\mathcal{G},\ell)$.}
\end{figure}
\end{Ex}

A \textit{directed graph} is a pair $(V,E)$ where $V$ is a finite
set of vertices, and $E$ is a finite multiset of ordered pairs of
elements of $V$ that we call \textit{directed edges}. For a
directed edge $e\in E$ we will denote by $e^+$ and $e^-$ the first
and the second coordinate of $e$ respectively. Pictorially, to
denote a directed edge we draw an arrow pointing toward its first
coordinate.

Given a graph $\mathcal{G}=(V,E)$, an \textit{orientation}
$E_{\theta}=R_{\theta}\cup D_{\theta}$ of the edges $E=R\cup D$ is
a multiset of ordered pairs of elements of $V$ whose underlying
sets are the elements of $E$. We will call $\mathcal{G}_{\theta}$
the corresponding directed graph.

Given a labelled graph $(\mathcal{G},\ell)$, where
$\mathcal{G}=(V,E)$ and $E=R\cup D$, we define the
\textit{deletion} of a regular edge $e \in R$ to be the pair
$(\mathcal{G}-e,\ell_1)$, where $\mathcal{G}-e=(V,E_1)$ is the
classical deletion of the edge $e$ (i.e. $E_1=R_1\cup D_1$,
$R_1=R\setminus \{e\}$, $D_1=D$), and $\ell_1$ is simply the
restriction of $\ell$ to $E_1$; we define the \textit{contraction}
of $e\in R$ to be the pair $(\mathcal{G}/e,\ell_2)$, where
$\mathcal{G}/e=(V,E_2)$ is the graph obtained from $\mathcal{G}$
by removing $e$ from $R$ and putting it in $D$, i.e. making the
regular edge $e$ into a dotted one (i.e $E_2=R_2\cup D_2$,
$R_2=R\setminus \{e\}$, $D_2=D\cup \{e\}$), and $\ell_2$ is the
same as $\ell$.

\begin{Ex}
Consider $(\mathcal{G},\ell)$ as in Example \ref{ex:labelgraph}.
For $e:=\{v_2,v_3\}\in R$ we show the deletion and the contraction
of $e$ in Figure 4.

\begin{figure}[h]
\subfigure[Deletion of
$e$]{\includegraphics[width=60mm,clip=true,trim=10mm 180mm 30mm
10mm]{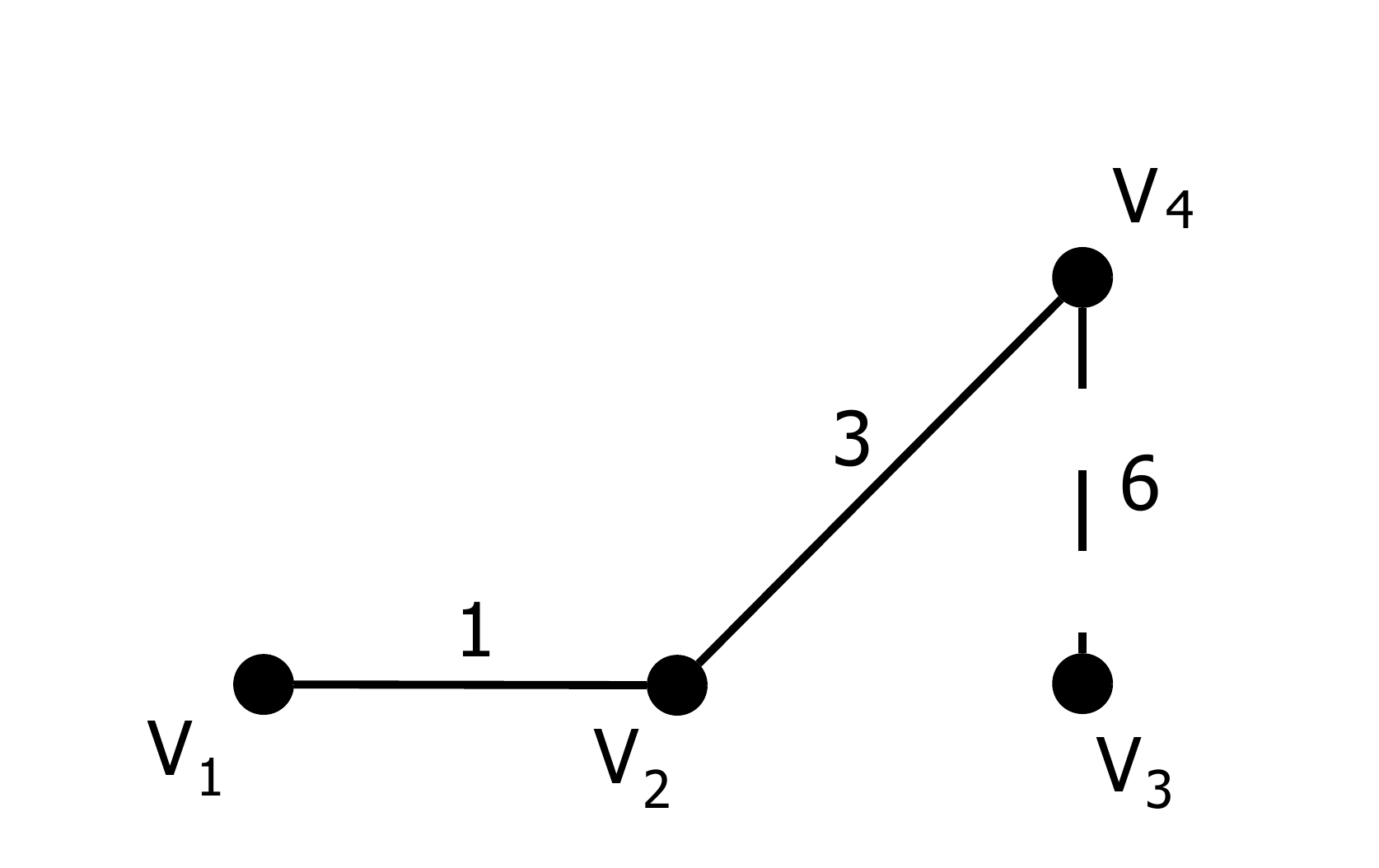}} \subfigure[Contraction of
$e$]{\includegraphics[width=60mm,clip=true,trim=10mm 180mm 30mm
10mm]{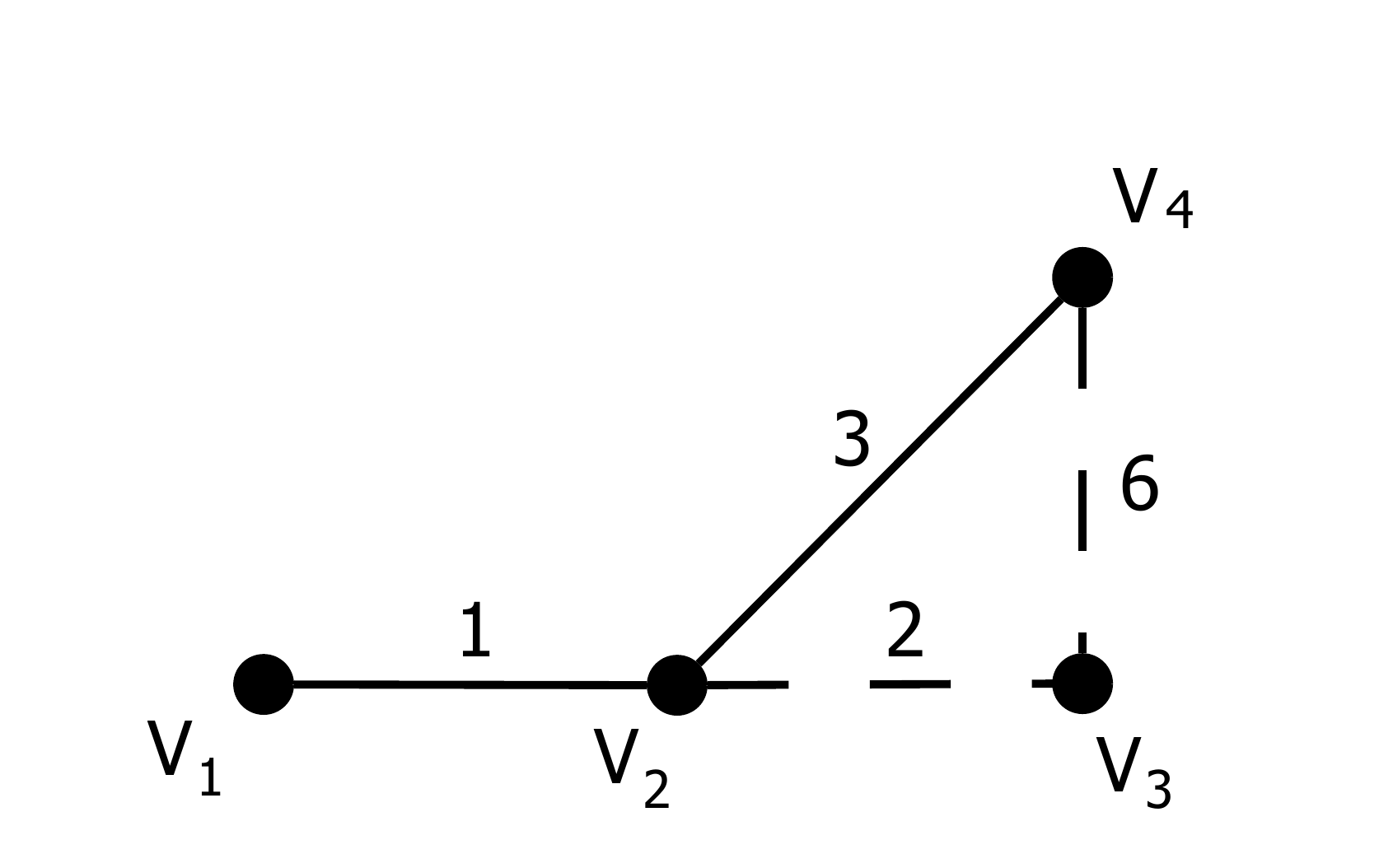}}

\caption{The deletion and the contraction of $e$}
\end{figure}
\end{Ex}

Given a graph $\mathcal{G}=(V,E)$ with $E=R\cup D$, we denote by
$\overline{\mathcal{G}}=(\overline{V},\overline{E})$ the graph
obtained from $\mathcal{G}$ by (classically) contracting the edges
in $D$. For a regular edge $e\in R$, we will use the notation
$\overline{\mathcal{G}- e}=(\overline{V}_1,\overline{E}_1)$ and
$\overline{\mathcal{G}/ e}=(\overline{V}_2,\overline{E}_2)$.

\section{Arithmetic matroids}

We recall here same basic notions of the theory of arithmetic
matroids. We refer to \cite{DM2} for proofs and for a more
systematic treatment.

We will use the word \emph{list} as a synonymous of multiset.
Hence a list may contain several copies of the same element.

\subsection{Matroids}

A \textit{matroid} $\mathfrak{M}=\mathfrak{M}_X=(X,rk)$ is a list
of \textit{vectors} $X$ with a \textit{rank function}
$rk:\mathbb{P}(X)\rightarrow \mathbb{N}\cup \{0\}$ which satisfies
the following axioms:
\begin{enumerate}
    \item if $A\subseteq X$, then $rk(A)\leq |A|$;
    \item if $A,B\subseteq X$ and $A\subseteq B$, then $rk(A)\leq rk(B)$;
    \item if $A,B\subseteq X$, then $rk(A\cup B)+rk(A\cap B)\leq
    rk(A)+rk(B)$.
\end{enumerate}

A sublist $A$ of $X$ is called \textit{independent} if
$rk(A)=|A|$. It is easy to show that the independent sublists
determine the matroid structure.

\begin{Ex}
\begin{enumerate}
    \item A list $X$ of vectors in a vector space, where the independent
sublists are defined to be the linearly independent ones naturally
form a matroid.
    \item A list $X$ of edges in a graph, where the independent
    sublists are the edges of the subgraphs that are forests (i.e. the subgraphs without circuits)
    naturally form a matroid.
\end{enumerate}
\end{Ex}

Given a matroid $\mathfrak{M}_X$ and a vector $v\in X$, we can
define the \textit{deletion} of $\mathfrak{M}_X$ as the matroid
$\mathfrak{M}_{X_1}$, whose list of vectors is $X_1:=X\setminus
\{v\}$, and whose independent lists are just the independent lists
of $\mathfrak{M}_X$ contained in $X_1$. Notice that the rank
function $rk_1$ of $\mathfrak{M}_{X_1}$ is just the restriction of
the rank function $rk$ of $\mathfrak{M}_{X}$.

Given a matroid $\mathfrak{M}_X$ and a vector $v\in X$, we can
define the \textit{contraction} of $\mathfrak{M}_X$ as the matroid
$\mathfrak{M}_{X_2}$, whose list of vectors is $X_2:=X\setminus
\{v\}$, and whose rank function $rk_2$ is given by
$rk_2(A):=rk(A\cup \{v\})-rk(\{v\})$, where of course $rk$ is the
rank function of $\mathfrak{M}_X$.

\begin{Ex}
\begin{enumerate}
    \item For a matroid given by a list $X$ of vectors in a vector space,
    the deletion consists of removing the vector from the list,
    while the contraction consists of removing it from the list,
    taking the quotient by the subspace that it generates, and
    identifying the remaining vectors with their cosets.
    \item For a matroid given by a list $X$ of edges in a graph, the deletion
    consists of removing the edge from the list, i.e. the classical deletion of the edge,
    while the contraction corresponds to the classical contraction
    of the edge.
\end{enumerate}
\end{Ex}

Given two matroids $\mathfrak{M}_{X_1}=(X_1,I_1)$ and
$\mathfrak{M}_{X_2}=(X_2,I_2)$, we can form their \textit{direct
sum}: this will be the matroid
$\mathfrak{M}_X=\mathfrak{M}_{X_1}\oplus \mathfrak{M}_{X_2}$ whose
list of vectors is the disjoint union $X:=X_1\sqcup X_2$, and
where the independent lists will be the disjoint unions of lists
from $I_1$ with lists from $I_2$. Hence for any sublist
$A\subseteq X$, the rank $rk(A)$ of $A$ will be the sum of the
rank $rk_1(A\cap X_1)$ of $A\cap X_1$ in $\mathfrak{M}_{X_1}$ with
the rank $rk_2(A\cap X_2)$ of $A\cap X_2$ in $\mathfrak{M}_{X_2}$.

The \emph{Tutte polynomial} of the matroid $\mathfrak{M}_X=(X,rk)$
is defined as
$$T_X(x,y):= \sum_{A\subseteq X} (x-1)^{rk(X)-rk(A)} (y-1)^{|A|-rk(A)}.$$

\subsection{Arithmetic matroids}

An \textit{arithmetic matroid} is a pair $(\mathfrak{M}_X,m)$,
where $\mathfrak{M}_X$ is a matroid on a list of vectors $X$, and
$m$ is a \textit{multiplicity function}, i.e.
$m:\mathbb{P}(X)\rightarrow \mathbb{N}\setminus \{0\}$ has the
following properties:
\begin{itemize}
    \item[(1)] if $A\subseteq X$ and $v\in X$ is dependent on $A$, then
    $m(A\cup\{v\})$ divides $m(A)$;
    \item[(2)] if $A\subseteq X$ and $v\in X$ is independent on $A$, then
    $m(A)$ divides $m(A\cup\{v\})$;
    \item[(3)] if $A\subseteq B\subseteq X$ and $B$ is a disjoint union $B=A\cup F\cup T$
    such that for all $A\subseteq C\subseteq B$  we have $rk(C)=rk(A)+|C\cap F|$, then
    $$
    m(A)\cdot m(B) = m(A\cup F)\cdot m(A\cup T).
    $$
    \item[(4)] if $A\subseteq B$ and $rk(A)=rk(B)$, then
    $$
    \mu_B(A):=\sum_{A\subseteq T\subseteq B}(-1)^{|T|-|A|}m(T)\geq 0.
    $$
    \item[(5)] if $A\subseteq B$ and $rk^*(A)=rk^*(B)$, then
    $$
    \mu_{B}^*(A):=\sum_{A\subseteq T\subseteq B}(-1)^{|T|-|A|}m(X\setminus T)\geq 0.
    $$
\end{itemize}
\begin{Ex} \label{ex:arithmatroid}
The prototype of an arithmetic matroid is the one that we are
going to associate now to a finite list $X$ of elements of a
finitely generated abelian group $G$.

Given a sublist $A\subseteq X$, we will denote by $\langle
A\rangle$ the subgroup of $G$ generated by the underlying set of
$A$.

We define the rank of a sublist $A\subseteq X$ as the maximal rank
of a free (abelian) subgroup of $\langle A\rangle$. This defines a
matroid structure on $X$.

For $A\subseteq X$, let $G_A$ be the maximal subgroup of $G$ such
that $\langle A\rangle\leq G_A$ and $|G_A: \langle
A\rangle|<\infty$, where $|G_A: \langle A\rangle|$ denotes the
index (as subgroup) of $\langle A\rangle$ in $G_A$. Then the
multiplicity $m(A)$ is defined as $m(A):=|G_A: \langle A\rangle|$.
\end{Ex}

\begin{Rem} \label{rm:GCD}
Notice that in $\mathbb{Z}^{m}$, to compute the multiplicity of a
list of elements, it is enough to see the elements as the columns
of a matrix, and to compute the greatest common divisor of its
minors of order the rank of the matrix (cf. \cite[Theorem
2.2]{StL0}).
\end{Rem}

Given an arithmetic matroid $(\mathfrak{M}_X,m)$ and a vector
$v\in X$, we define the \textit{deletion} of $(\mathfrak{M}_X,m)$
as the arithmetic matroid $(\mathfrak{M}_{X_1},m_1)$, where
$\mathfrak{M}_{X_1}$ is the deletion of $\mathfrak{M}_{X}$ and
$m_1(A):=m(A)$ for all $A\subseteq X_1=X\setminus \{v\}$.

Given an arithmetic matroid $(\mathfrak{M}_X,m)$ and a vector
$v\in X$, we define the \textit{contraction} of
$(\mathfrak{M}_X,m)$ as the arithmetic matroid
$(\mathfrak{M}_{X_2},m_2)$, where $\mathfrak{M}_{X_2}$ is the
contraction of $\mathfrak{M}_{X}$ and $m_2(A):=m(A\cup \{v\})$ for
all $A\subseteq X_2=X\setminus \{v\}$.

We say that $v\in X$ is:
\begin{itemize}
  \item \textit{free} if both
$rk_1(X\setminus\{v\})=rk(X\setminus\{v\})=rk(X)-1$ and
$rk_2(X\setminus \{v\})=rk(X)-1$;
  \item \textit{torsion} if both
$rk_1(X\setminus\{v\})=rk(X)$ and $rk_2(X\setminus \{v\})=rk(X)$;
  \item  \textit{proper} if both
$rk_1(X\setminus\{v\})=rk(X)$ and $rk_2(X\setminus
\{v\})=rk(X)-1$.
\end{itemize}

Observe that any vector of a matroid is of one and only one of the
previous three types.

\begin{Ex}
In the Example \ref{ex:arithmatroid}, the deletion consists of
removing the vector from the list, while the contraction consists
of removing it from the list, taking the quotient by the subgroup
that it generates, and identifying the remaining vectors with
their cosets.

In this case the torsion vectors are the torsion elements in the
algebraic sense, while the free vectors are the elements $v\in X$
such that $\langle X\rangle\cong \langle X\setminus \{v\}\rangle
\oplus \langle v\rangle$.
\end{Ex}

Given two arithmetic matroids $(\mathfrak{M}_{X_1},m_1)$ and
$(\mathfrak{M}_{X_2},m_2)$ we define their \textit{direct sum} as
the arithmetic matroid $(\mathfrak{M}_{X},m)$, where
$\mathfrak{M}_{X}:=\mathfrak{M}_{X_1}\oplus \mathfrak{M}_{X_2}$,
and for any sublist $A\subseteq X=X_1\sqcup X_2$, we set
$m(A):=m_1(A\cap X_1)\cdot m_2(A\cap X_2)$.
\begin{Rem} \label{rm:directsum}
If the two arithmetic matroids are represented by a list $X_1$ of
elements of a group $G_1$ and a list $X_2$ of elements of a group
$G_2$, then, with the obvious identifications, $X:=X_1\sqcup X_2$
is a list of elements of the group $G:=G_1\oplus G_2$, and the
arithmetic matroid associated to this list is exactly the direct
sum of the two.
\end{Rem}
We associate to an arithmetic matroid $\mathfrak{M}_X$ its
\textit{arithmetic Tutte polynomial}
$M_X(x,y)=M(\mathfrak{M}_X;x,y)$ defined as
\begin{equation} \label{eq:aritutte}
M_X(x,y):=\sum_{A\subseteq
X}m(A)(x-1)^{rk(X)-rk(A)}(y-1)^{|A|-rk(A)}.
\end{equation}

\subsection{Basic properties}

We summarize in the following theorem some basic properties of
this polynomial (cf. \cite[Proofs of Lemmas 6.4, 6.6, 7.6 and
Section 3.6]{DM2}).

\begin{Thm} \label{thm:properties}
Let $\mathfrak{M}_X$ be an arithmetic matroid, and let $v\in X$ be
a vector. Denote by $M_{X}(x,y)$, $M_{X_1}(x,y)$ and
$M_{X_2}(x,y)$ the arithmetic Tutte polynomial associated to
$\mathfrak{M}_X$, the deletion of $v$ and the contraction of $v$
respectively.
\begin{enumerate}
    \item If $v$ is a proper vector then
    $$
    M_{X}(x,y)=M_{X_1}(x,y)+M_{X_2}(x,y).
    $$
    \item If $v$ is a free vector then
    $$
    M_{X}(x,y)=(x-1)M_{X_1}(x,y)+M_{X_2}(x,y).
    $$
    \item If $v$ is a torsion vector then
    $$
    M_{X}(x,y)=M_{X_1}(x,y)+(y-1)M_{X_2}(x,y).
    $$
    \item If $\mathfrak{M}_X$ is the direct sum of two matroids
    $\mathfrak{M}_1$ and $\mathfrak{M}_2$, then
    $$
    M_{X}(x,y)=M(\mathfrak{M}_1;x,y)\cdot M(\mathfrak{M}_2;x,y).
    $$
\end{enumerate}
\end{Thm}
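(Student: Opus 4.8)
The plan is to verify each of the four identities by manipulating the defining sum \eqref{eq:aritutte} and using the corresponding behaviour of the rank function and the multiplicity function under deletion, contraction and direct sum. For parts (1)--(3), I would split the sum over all $A \subseteq X$ into the sum over subsets not containing $v$ (which are exactly the subsets of $X_1 = X \setminus \{v\}$) and the sum over subsets containing $v$ (which are exactly the sets $A' \cup \{v\}$ with $A' \subseteq X_2 = X \setminus \{v\}$). Thus
\begin{equation} \label{eq:split}
M_X(x,y) = \sum_{A \subseteq X_1} m(A)(x-1)^{rk(X)-rk(A)}(y-1)^{|A|-rk(A)} + \sum_{A' \subseteq X_2} m(A' \cup \{v\})(x-1)^{rk(X)-rk(A' \cup \{v\})}(y-1)^{|A'|+1-rk(A' \cup \{v\})}.
\end{equation}
In the first sum, $m$ restricted to subsets of $X_1$ is exactly $m_1$, and $rk$ restricted to those subsets is exactly $rk_1$; in the second sum, $m(A' \cup \{v\}) = m_2(A')$ by the definition of the contraction, and $rk_2(A') = rk(A' \cup \{v\}) - rk(\{v\})$.

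The three cases are then distinguished by comparing $rk(X)$ with $rk_1(X_1)$ and $rk_2(X_2)$, and by the value of $rk(\{v\})$. If $v$ is \emph{proper}, then $rk_1(X_1) = rk(X)$ and $rk_2(X_2) = rk(X)-1$, while $rk(\{v\}) = 1$; substituting these into \eqref{eq:split} makes the first sum equal to $M_{X_1}(x,y)$ and the second equal to $M_{X_2}(x,y)$ after the shift $rk(X)-rk(A'\cup\{v\}) = rk_2(X_2) - rk_2(A')$ and $|A'|+1-rk(A'\cup\{v\}) = |A'| - rk_2(A')$. If $v$ is \emph{free}, then $rk_1(X_1) = rk(X)-1$ and $rk_2(X_2) = rk(X)-1$, $rk(\{v\})=1$, so the first sum carries an extra factor $(x-1)^{rk(X)-rk_1(X_1)} = (x-1)$ and becomes $(x-1)M_{X_1}(x,y)$, while the second is again $M_{X_2}(x,y)$. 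If $v$ is \emph{torsion}, then $rk(\{v\}) = 0$, $rk_1(X_1) = rk(X)$ and $rk_2(X_2) = rk(X)$; now $rk(A'\cup\{v\}) = rk(A')$ for every $A'$, and the exponent of $(y-1)$ in the second sum is $|A'|+1-rk_2(A')$, producing the extra factor $(y-1)$ and giving $(y-1)M_{X_2}(x,y)$, while the first sum is $M_{X_1}(x,y)$.

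For part (4), I would use that any $A \subseteq X = X_1 \sqcup X_2$ decomposes uniquely as $A = A_1 \sqcup A_2$ with $A_i = A \cap X_i$, that $rk(A) = rk_1(A_1) + rk_2(A_2)$ and $rk(X) = rk_1(X_1) + rk_2(X_2)$ by the direct-sum definition of the matroid, and that $m(A) = m_1(A_1) \cdot m_2(A_2)$ by the direct-sum definition of the multiplicity function. Then the sum in \eqref{eq:aritutte} factors as a product of two independent sums over $A_1 \subseteq X_1$ and $A_2 \subseteq X_2$, each of which is the corresponding arithmetic Tutte polynomial; the exponents of $(x-1)$ and $(y-1)$ add up correctly because $rk(X)-rk(A) = (rk_1(X_1)-rk_1(A_1)) + (rk_2(X_2)-rk_2(A_2))$ and $|A|-rk(A) = (|A_1|-rk_1(A_1)) + (|A_2|-rk_2(A_2))$. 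None of this requires the axioms (1)--(5) of the multiplicity function beyond what is packaged into the definitions of deletion, contraction and direct sum; the only genuine bookkeeping obstacle is keeping the rank shifts straight in the three cases of the splitting, which is exactly where the trichotomy free/torsion/proper is used.
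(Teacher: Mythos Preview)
Your argument is correct. The paper does not actually prove this theorem: it merely states it and refers to \cite[Proofs of Lemmas 6.4, 6.6, 7.6 and Section 3.6]{DM2} for the details. Your direct verification---splitting the defining sum according to whether $v\in A$ or not, and then tracking the shifts $rk(X)-rk_1(X_1)\in\{0,1\}$ and $rk(\{v\})\in\{0,1\}$ in the three cases---is exactly the standard computation one expects in that reference, and your handling of part (4) via the multiplicativity of $m$ and additivity of $rk$ on a direct sum is likewise the natural argument. There is nothing to add.
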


\subsection{A fundamental construction}

We associate to each labelled graph $(\mathcal{G},\ell)$ an
arithmetic matroid $\mathfrak{M}_{\mathcal{G},\ell}$ in the
following way.

First of all we enumerate the vertices $V=\{v_1,v_2,\dots,v_n\}$
and we fix an orientation $E_{\theta}$ of the edges $E$. Then to
each edge $e=(v_i,v_j)\in E_{\theta}$ we associate the element
$x_{e}\in \mathbb{Z}^n$ defined as the vector whose $i$-th
coordinate is $\ell(e)$, whose $j$-th coordinate is $-\ell(e)$,
and whose other coordinates are $0$. We denote by $X_R$ and $X_D$
the multisets of vectors in $\mathbb{Z}^n$ corresponding to
elements of $R$ and $D$ respectively.

Then we look at the group $G:=\mathbb{Z}^n/ \langle X_D\rangle$,
and we identify the elements of $X_R$ with the corresponding
cosets in $G$. This gives as an arithmetic matroid
$\mathfrak{M}=\mathfrak{M}_{X_R}=\mathfrak{M}_{\mathcal{G},\ell}$,
which is clearly independent on the orientation that we choose
(changing the orientation of an edge corresponds to multiply the
corresponding vector in $X_R$ or $X_D$ by $-1$).

We denote by $M_{X_R}(x,y)=M_{\mathcal{G},\ell}(x,y)$ the
associated arithmetic Tutte polynomial.

\begin{Ex}
Consider $(\mathcal{G},\ell)$ as in Example \ref{ex:labelgraph}
and the orientation shown in Figure 5.

\begin{figure}[h]
\includegraphics[width=60mm,clip=true,trim=10mm 180mm 30mm 10mm]{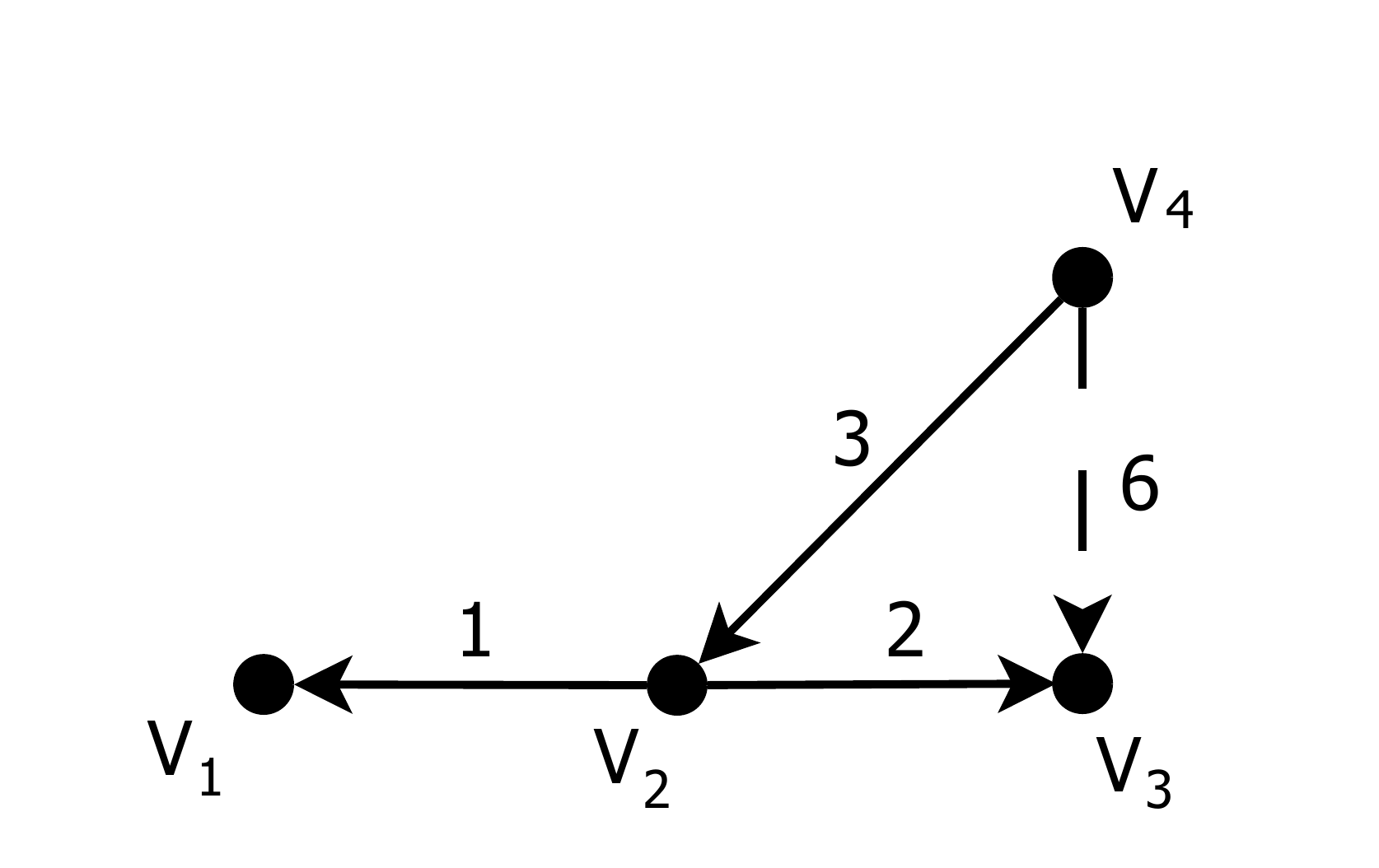}
\caption{The labelled graph $(\mathcal{G},\ell)$ with an
orientation.}
\end{figure}

We have $X_R=\{(1,-1,0,0),(0,-2,2,0),(0,3,0,-3)\}$$\subseteq
\mathbb{Z}^4$ and $X_D=\{(0,0,6,-6)\}$ $\subseteq \mathbb{Z}^4$,
so $G:=\mathbb{Z}^4/\langle (0,0,6,-6)\rangle$, and we identify
$X_R$ with $\left\{ v_1:=\overline{(1,-1,0,0)}\right.$,
$v_2:=\overline{(0,-2,2,0)}$,
$\left.v_3:=\overline{(0,3,0,-3)}\right\} \subseteq G$, where
$\overline{u}$ indicates the coset of the representative $u\in
\mathbb{Z}^4$.

We have (cf. Remark \ref{rm:GCD})
$m(\emptyset)=m(\{v_1\})=m(\{v_2,v_3\})=m(X_R)=6$,
$m(\{v_2\})=m(\{v_1,v_2\})=12$, $m(\{v_3\})=m(\{v_1,v_3\})=18$, so
$M_{\mathcal{G},\ell}(x,y)=6x^2+18x+6xy$.
\end{Ex}

\section{Arithmetic colorings}

In this section we discuss the notion of arithmetic coloring.

\subsection{Definitions}

Given a labelled graph $(\mathcal{G},\ell)$, let $q$ be a positive
integer. An \textit{arithmetic (proper) $q$-coloring} of
$(\mathcal{G},\ell)$ is a map $c:V\rightarrow
\mathbb{Z}/q\mathbb{Z}$ that satisfies the following conditions:
\begin{enumerate}
    \item if $u,v\in V$ and $e:=\{u,v\}\in R$, then $\ell(e)\cdot c(u)\neq \ell(e)\cdot
    c(v)$;
    \item if $u,v\in V$ and $e:=\{u,v\}\in D$, then $\ell(e)\cdot c(u)= \ell(e)\cdot
    c(v)$.
\end{enumerate}

For our results we will need to restrict ourself to consider only
positive integers $q$ such that $\ell(e)$ divides $q$ for all
$e\in E$ (cf. Remark \ref{rm:conditioncolor}). We will call such
an integer \textit{admissible}.
\begin{Rem}
For a trivial labelling $\ell\equiv 1$ and $D=\emptyset$, clearly
we just recover the usual notion of \textit{(proper) $q$-coloring}
(any $q$ will be admissible now) of the underlying graph.

If $\ell\equiv 1$ and $D\neq \emptyset$, then we can still
interpret it as a $q$-coloring, but this time of the graph
$\overline{\mathcal{G}}$ (that is obtained from $\mathcal{G}$ by
performing the classical contraction of all the edges in $D$).

More generally, given $\ell(e)=1$ for some $e\in D$, if we do a
classical contraction of $e$, then we get a graph with the same
number of arithmetic $q$-coloring.
\end{Rem}
\begin{Ex} \label{ex:color}
Consider $(\mathcal{G},\ell)$ with $\mathcal{G}:=(V,E)$,
$V=\{v_1,v_2,v_3\}$, $R:=\{e_1:=\{v_1,v_2\},e_2:=\{v_1,v_2\}\}$,
$D:=\{e_3:=\{v_2,v_3\}\}$ so $E=R\cup D=\{e_1,e_2,e_3\}$,
$\ell(e_1)=2$, $\ell(e_2)=3$ and $\ell(e_3)=2$ (see Figure 6).

\begin{figure}[h]
\includegraphics[width=60mm,clip=true,trim=10mm 230mm 30mm 10mm]{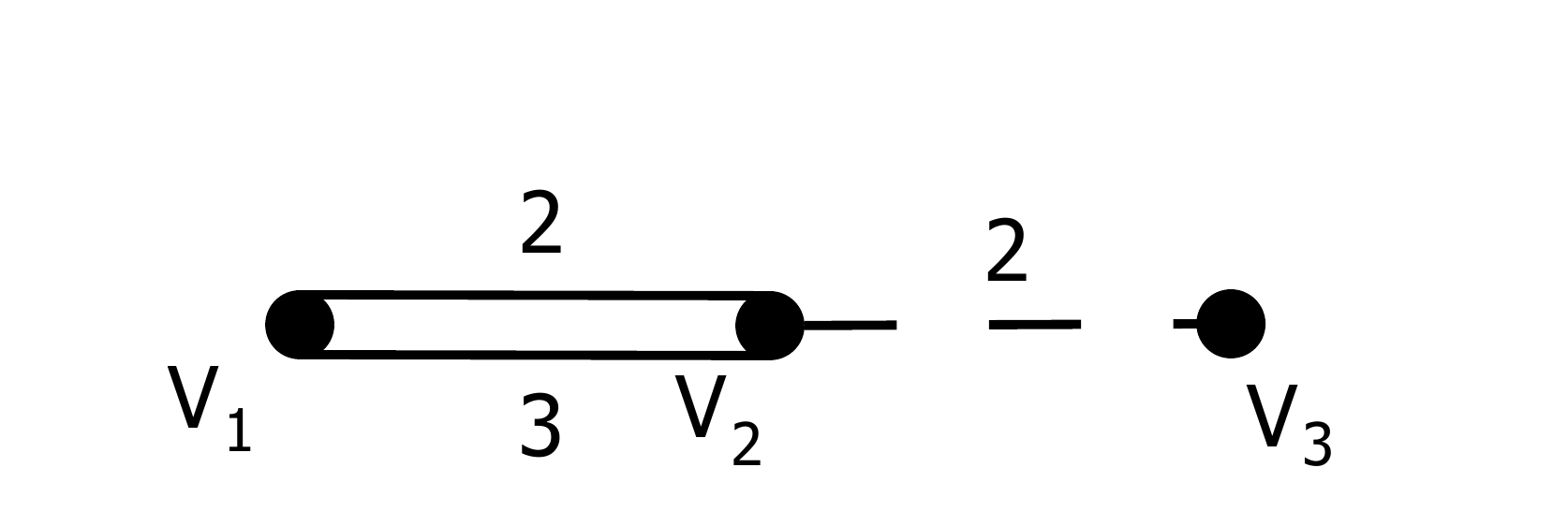}
\caption{The labelled graph $(\mathcal{G},\ell)$.}
\end{figure}

Then any multiple of $6$ is admissible. For example for $q=6$ we
denote the $6$-colorings as vectors of
$(\mathbb{Z}/6\mathbb{Z})^3$, where for every $i=1,2,3$, the
$i$-th coordinate corresponds to the color of the vertex $v_i$.

In this case there are $24$ possible $6$-colorings of
$(\mathcal{G},\ell)$:
$(\overline{a},\overline{1}+\overline{a},\overline{1}+\overline{a})$,
$(\overline{a},\overline{1}+\overline{a},\overline{4}+\overline{a})$,
$(\overline{a},\overline{5}+\overline{a},\overline{5}+\overline{a})$,
$(\overline{a},\overline{5}+\overline{a},\overline{2}+\overline{a})$
for all $\overline{a}\in \mathbb{Z}/6\mathbb{Z}$.
\end{Ex}

We define the \textit{arithmetic chromatic polynomial} of a
labelled graph $(\mathcal{G},\ell)$ to be the function
${\chi}_{\mathcal{G},\ell}(\cdot): \mathbb{N}\setminus
\{0\}\rightarrow \mathbb{N}\cup \{0\}$, which assigns to each
positive integer $q$ the number of arithmetic $q$-colorings of
$(\mathcal{G},\ell)$. We will show in Theorem \ref{thm:maincolor}
that this is in fact a polynomial function.

\begin{Rem} \label{rm:trivialcolor}
For a trivial labelling $\ell\equiv 1$ and $D=\emptyset$, we just
recover the usual notion of the \textit{chromatic polynomial}
$\chi_{\mathcal{G}}(q)$ of the underlying graph $\mathcal{G}$.

If $\ell\equiv 1$ and $D\neq \emptyset$, then we can still
interpret it as a chromatic polynomial of the graph
$\overline{\mathcal{G}}$.
\end{Rem}

\begin{Ex}
Consider $(\mathcal{G},\ell)$ as in Example \ref{ex:color}. We
have $q$ choices for the color $c(v_1)$ of $v_1$, $q-4$ choices
for $c(v_2)$ (all except $c(v_1)$, $c(v_1)+\overline{q/2}$,
$c(v_1)+\overline{q/3}$, $c(v_1)+\overline{2q/3}$) and $2$ choices
for $c(v_3)$ ($c(v_2)$ and $c(v_2)+\overline{q/2}$). Hence
$\chi_{\mathcal{G},\ell}(q)=2q(q-4)=2q^2-8q$, which agrees with
what we found for $q=6$ ($\chi_{\mathcal{G},\ell}(6)=24$).
\end{Ex}

\subsection{Main result}

We state the main result of this section.

\begin{Thm} \label{thm:maincolor}
Let $(\mathcal{G},\ell)$ be a labelled graph and let $q$ be an
admissible (positive) integer, i.e. $\ell(e)$ divides $q$ for all
$e\in E$. Let $\frak{M}_{\mathcal{G},\ell}$ be the associated
arithmetic matroid, and let $M_{\mathcal{G},\ell}(x,y)$ be the
associated arithmetic Tutte polynomial. If $k$ is the number of
connected components of the graph $\mathcal{G}$, then
$$
{\chi}_{\mathcal{G},\ell}(q)=\widetilde{{\chi}}_{\mathcal{G},\ell}(q):=(-1)^{|\overline{V}|-k}q^k
M_{\mathcal{G},\ell}(1-q,0).
$$
\end{Thm}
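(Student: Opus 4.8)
The plan is to proceed by induction on the number of regular edges $|R|$, using the deletion–contraction recursions for the arithmetic Tutte polynomial recorded in Theorem~\ref{thm:properties} and showing that the right-hand side $\widetilde{\chi}_{\mathcal{G},\ell}(q)$ satisfies the same recursions as the arithmetic chromatic polynomial $\chi_{\mathcal{G},\ell}(q)$, together with matching base cases. First I would set up the correspondence between the arithmetic matroid $\mathfrak{M}_{\mathcal{G},\ell}$ and the graph: by construction $rk(X_R)=|\overline{V}|-k$ (the rank of the cycle space of $\overline{\mathcal{G}}$, noting contracting dotted edges can only decrease the number of components if at all, and $k$ is taken to be that of $\mathcal{G}$ — a point I would need to check carefully, perhaps $k$ equals the number of components of $\overline{\mathcal{G}}$ as well when labels are nonzero). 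A regular edge $e\in R$ is a \emph{free} vector in $\mathfrak{M}_{\mathcal{G},\ell}$ exactly when $e$ is a bridge (isthmus) of $\overline{\mathcal{G}}$, it is a \emph{torsion} vector when $\overline{e}=0$ in $G$, i.e.\ when contracting the dotted edges already identifies the two endpoints of $e$, and otherwise it is \emph{proper}. Deletion of $e$ in the matroid corresponds to the labelled-graph deletion $(\mathcal{G}-e,\ell_1)$, and contraction corresponds to $(\mathcal{G}/e,\ell_2)$ (moving $e$ into $D$).

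Next I would establish the deletion–contraction identity for arithmetic $q$-colorings: for a regular edge $e=\{u,v\}$, an arithmetic $q$-coloring of $(\mathcal{G}-e,\ell_1)$ either satisfies $\ell(e)c(u)=\ell(e)c(v)$ or not; the ones that do \emph{not} are precisely the arithmetic $q$-colorings of $(\mathcal{G},\ell)$, and the ones that \emph{do} are precisely the arithmetic $q$-colorings of $(\mathcal{G}/e,\ell_2)$. Hence
\begin{equation*}
\chi_{\mathcal{G}-e,\ell_1}(q)=\chi_{\mathcal{G},\ell}(q)+\chi_{\mathcal{G}/e,\ell_2}(q),
\end{equation*}
i.e.\ $\chi_{\mathcal{G},\ell}(q)=\chi_{\mathcal{G}-e,\ell_1}(q)-\chi_{\mathcal{G}/e,\ell_2}(q)$. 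This is exactly the proper-vector case of Theorem~\ref{thm:properties}(1) after the substitution $(x,y)=(1-q,0)$ and multiplication by $(-1)^{|\overline V|-k}q^k$, provided the bookkeeping exponents line up — here I must track how $|\overline{V}|$ and $k$ change under deletion versus contraction (deletion of a proper edge leaves $|\overline{V}|$, $rk$, and possibly $k$ unchanged or changes them consistently; contraction of $e$ into $D$ decreases $rk(X_R)$ by one and decreases $|\overline V|$ by one while keeping $k$). I would then treat the free-vector (bridge) and torsion-vector cases: when $e$ is a bridge of $\overline{\mathcal G}$, the two color conditions $\ell(e)c(u)=\ell(e)c(v)$ and $\neq$ partition colorings so that $\chi_{\mathcal{G},\ell}(q)$ gets a factor $(q - \#\{\text{forbidden offsets}\})$; since $\ell(e)\mid q$, the number of $t\in\mathbb Z/q\mathbb Z$ with $\ell(e)t=0$ is $\ell(e)$, but after contracting $D$ the relevant count matches the $(x-1)$-factor $= -q$ specialization — this is where admissibility is essential. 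When $e$ is torsion, $\ell(e)c(u)=\ell(e)c(v)$ is automatically forced to fail in a way that... actually makes $\chi_{\mathcal{G},\ell}(q)=0$, matching the $(y-1)=-1$ times $M_{X_2}$ plus $M_{X_1}$ structure, and I would verify the signs.

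For the base case, when $R=\emptyset$ the matroid has no regular vectors, $M_{\mathcal{G},\ell}(x,y)=m(\emptyset)=1$ (the multiplicity of the empty list in $\mathbb Z^n/\langle X_D\rangle$ is $1$ when... again admissibility / torsion-freeness considerations), $rk(X_R)=0$ so $|\overline V|=k$, and the formula reads $\widetilde\chi_{\mathcal G,\ell}(q)=q^k$. On the other side, with no regular edges the only constraints are $\ell(e)c(u)=\ell(e)c(v)$ for dotted edges; counting maps $c:V\to\mathbb Z/q\mathbb Z$ satisfying these is counting points in $\ker$ of the corresponding map, which by the theory of the Smith normal form and admissibility equals $q^{\#\text{components of }\overline{\mathcal G}}=q^k$ — I would need to confirm that admissibility forces each dotted-edge constraint to have exactly $q$-many... no, exactly $q^{|V|}/q^{\#\text{equiv classes}}$, giving the clean power of $q$. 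If that count were not a pure power of $q$, the whole theorem would fail, so this is a genuine checkpoint rather than a formality. The main obstacle I anticipate is precisely this exponent bookkeeping: keeping $(-1)^{|\overline V|-k}q^k$ consistent across all three edge types under deletion and contraction (which change $\overline V$ and the matroid rank differently), and verifying that admissibility ($\ell(e)\mid q$ for all $e$) is exactly what makes the relevant solution counts come out as the predicted powers of $q$ rather than more complicated arithmetic functions of $q$ and the labels. The deletion–contraction bijection for colorings itself is elementary; the delicate part is the arithmetic of $\mathbb Z/q\mathbb Z$ entering through the labels.
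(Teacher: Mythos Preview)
Your overall architecture matches the paper's: establish the deletion--contraction recursion $\chi_{\mathcal{G},\ell}(q)=\chi_{\mathcal{G}-e,\ell}(q)-\chi_{\mathcal{G}/e,\ell}(q)$ for any regular edge, show via Theorem~\ref{thm:properties} (cases proper/free/torsion) that $\widetilde{\chi}_{\mathcal{G},\ell}(q)$ obeys the same recursion with the same exponent bookkeeping, and then settle the base case $R=\emptyset$. The recursion steps, including your observation that the coloring deletion--contraction is a single identity independent of edge type while the $\widetilde{\chi}$ side splits into three cases, are correct and are exactly what the paper does.

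The genuine gap is in your base case. When $R=\emptyset$ it is \emph{not} true that $m(\emptyset)=1$, and it is \emph{not} true that the number of arithmetic $q$-colorings is $q^k$. The multiplicity $m(\emptyset)$ is by definition the order of the torsion subgroup of $\mathbb{Z}^{|V|}/\langle X_D\rangle$, and as soon as any dotted edge has label $>1$ this torsion is nontrivial: already for two vertices joined by a single dotted edge of label $2$ one has $m(\emptyset)=2$, and the number of admissible $q$-colorings is $2q$, not $q$. (The example in Remark~\ref{rm:conditioncolor} has $m(\emptyset)=12$.) So both sides of your proposed base-case identity are wrong, though fortunately they are wrong in the same direction.

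What actually has to be shown for a connected graph with $R=\emptyset$ is that $\chi_{\mathcal{G},\ell}(q)=q\cdot m(\emptyset)$. The paper does this by viewing $[X_D]$ as a linear map $(\mathbb{Z}/q\mathbb{Z})^n\to(\mathbb{Z}/q\mathbb{Z})^m$ whose kernel is exactly the set of colorings, and then computing $|\mathrm{Ker}[X_D]|$ via the short exact sequence and the index $|\mathbb{Z}^m:\mathrm{Im}[X_D\cup Q]|$; admissibility ($\ell(e)\mid q$) is used precisely to argue that among the maximal minors of $[X_D\cup Q]$ only those coming from spanning trees plus columns of $qI$ matter, yielding $q^{m-n+1}\cdot m(\emptyset)$ for the cokernel and hence $q\cdot m(\emptyset)$ for the kernel. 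Your Smith-normal-form instinct is pointed in the right direction, but the answer it produces is $q\cdot m(\emptyset)$ per component, not $q$; this is the substantive arithmetic content of the theorem and cannot be skipped.
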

\begin{Ex}
Consider $(\mathcal{G},\ell)$ as in Example \ref{ex:color}.

Let us construct the associated arithmetic matroid: fix the
orientation $R_{\theta}:=\{e_1:=(v_1,v_2),e_2:=(v_2,v_1)\}$,
$D_{\theta}:=\{e_3:=(v_2,v_3)\}$ so that $E_{\theta}=R_{\theta}
\cup D_{\theta}$ (see Figure 7).

\begin{figure}[h]
\includegraphics[width=60mm,clip=true,trim=10mm 230mm 30mm 10mm]{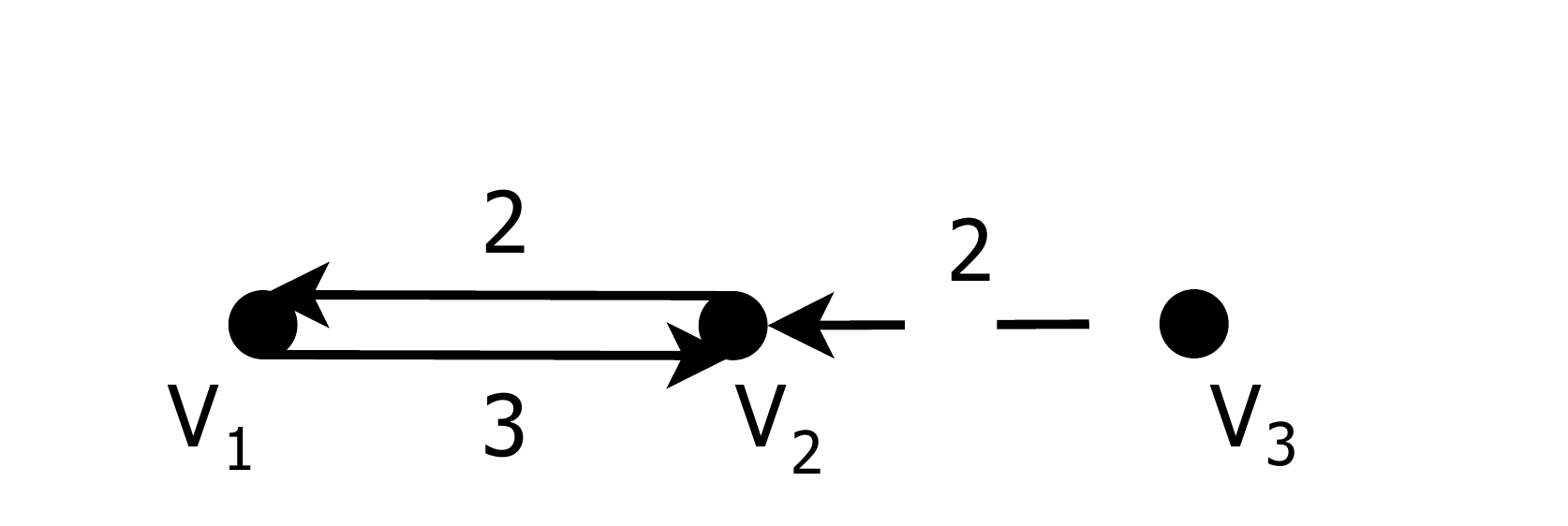}
\caption{The labelled graph $(\mathcal{G},\ell)$ with orientation
$E_{\theta}$.}
\end{figure}

Hence $X_D=\left\{(0,2,-2)\right\}$ $\subseteq \mathbb{Z}^3$, and
$X_R\equiv \left\{
\overline{(2,-2,0)},\overline{(-3,3,0)}\right\}\subseteq
G:=\mathbb{Z}^{3}/\langle X_D\rangle=\mathbb{Z}^{3}/\langle
(0,2,-2)\rangle$. An easy computation shows that
$M_{\mathcal{G},\ell}(x,y)=2x+6+2y$, and therefore
$$
\widetilde{\chi}_{\mathcal{G},\ell}(q)=-qM_{\mathcal{G},\ell}(1-q,0)
=2q^2-8q=\chi_{\mathcal{G},\ell}(q),
$$
as predicted.
\end{Ex}
\begin{Rem} \label{rm:conditioncolor}
The admissibility condition on $q$ (i.e. $\ell(e)$ divides it for
all $e\in E$) is necessary: for example, consider
$(\mathcal{G},\ell)$, where $\mathcal{G}:=(V,E)$ with
$V:=\{v_1,v_2,v_3\}$, $R:=\emptyset$,
$D:=\{\{v_1,v_2\},\{v_2,v_3\}\}$ so that $E=R\cup D$, and
$\ell(\{v_1,v_2\})=2$, $\ell(\{v_2,v_3\})=6$.

For $q=2$, the conditions on the colors are trivially satisfied,
hence we have $2^3=8$ arithmetic $2$-colorings.

Let us construct the associated arithmetic matroid: fix the
orientation $E_{\theta}=\{(v_1,v_2)$, $(v_3,v_2)\}$ (see Figure
8).

\begin{figure}[h]
\includegraphics[width=60mm,clip=true,trim=10mm 230mm 30mm 10mm]{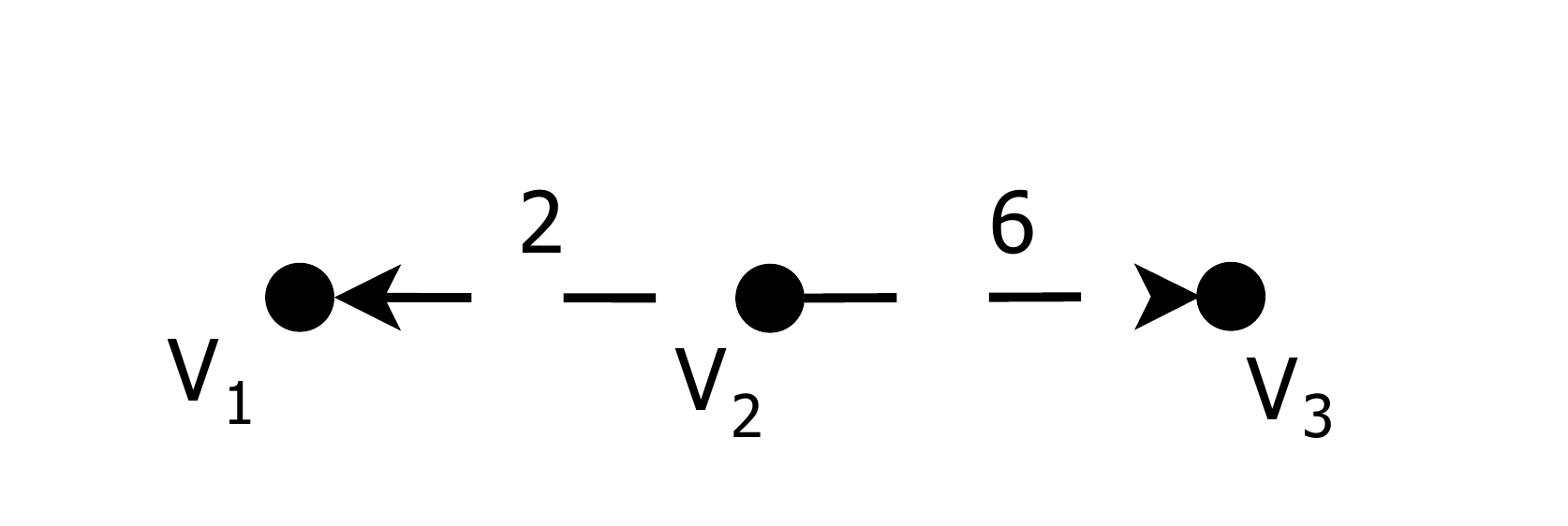}
\caption{The labelled graph $(\mathcal{G},\ell)$ with orientation
$E_{\theta}$.}
\end{figure}

Hence $X_D=\left\{(2,-2,0)\right.$, $\left.(0,-6,6)\right\}$
$\subseteq \mathbb{Z}^3$ and $X_R=\emptyset\subseteq
G:=\mathbb{Z}^{3}/\langle X_D\rangle=\mathbb{Z}^{3}/\langle
(2,-2,0),(0,-6,6)\rangle$. An easy computation gives
$M_{\mathcal{G},\ell}(x,y)=12$, and therefore
$$
\widetilde{\chi}_{\mathcal{G},\ell}(q)=qM_{\mathcal{G},\ell}(1-q,0)=12q.
$$
But then
$$
\widetilde{\chi}_{\mathcal{G},\ell}(2)=12\cdot 2=24\neq
8=\chi_{\mathcal{G},\ell}(2).
$$
\end{Rem}

If $\ell\equiv 1$, then we can identify the graph $\mathcal{G}$
with $\overline{\mathcal{G}}$ (cf. Remark \ref{rm:trivialcolor}).
In this case all the multiplicities in
$\mathfrak{M}_{\mathcal{G},\ell}$ are equal to $1$, therefore we
recover the following classical result of Tutte \cite{Tu} (cf.
also \cite[Proposition 6.3.1]{Wh}) as a special case of Theorem
\ref{thm:maincolor}.
\begin{Cor} \label{cor:color}
We have
$$
\chi_{\mathcal{G}}(q)=(-1)^{|V|-k}q^kT_{\mathcal{G}}(1-q,0),
$$
where $\chi_{\mathcal{G}}(q)$ is the chromatic polynomial of the
graph $\mathcal{G}=(V,E)$, $k$ is the number of connected
components of $\mathcal{G}$ and $T_{\mathcal{G}}(x,y)$ is the
associated Tutte polynomial.
\end{Cor}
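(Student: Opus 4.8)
The plan is to obtain this as the special case of Theorem~\ref{thm:maincolor} in which the labelling is trivial. Given a graph $\mathcal{G}=(V,E)$, I would regard it as the labelled graph $(\mathcal{G},\ell)$ with every edge regular (so $D=\emptyset$ and $R=E$) and $\ell\equiv 1$. Since $\ell(e)=1$ divides every positive integer, \emph{every} $q$ is admissible, so the theorem applies unrestrictedly. With $D=\emptyset$ there is nothing to contract, hence $\overline{\mathcal{G}}=\mathcal{G}$ and $|\overline{V}|=|V|$; and condition~(1) in the definition of an arithmetic $q$-coloring becomes $c(u)\neq c(v)$ for each edge while condition~(2) is vacuous, so an arithmetic $q$-coloring of $(\mathcal{G},\ell)$ is exactly a proper $q$-coloring of $\mathcal{G}$ and ${\chi}_{\mathcal{G},\ell}(q)={\chi}_{\mathcal{G}}(q)$.

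It then remains to identify $M_{\mathcal{G},\ell}(x,y)$ with $T_{\mathcal{G}}(x,y)$. By construction $G=\mathbb{Z}^{n}/\langle X_D\rangle=\mathbb{Z}^{n}$ with $n=|V|$, and $X_R$ is the list of vectors of the form $e_i-e_j$ indexed by the (oriented) edges of $\mathcal{G}$; this is the usual representation of the graphic matroid $\mathfrak{M}_{\mathcal{G}}$, and in particular $rk(X_R)=|V|-k$. By Remark~\ref{rm:GCD}, the multiplicity $m(A)$ of a sublist $A\subseteq X_R$ is the greatest common divisor of the minors of order $rk(A)$ of the matrix whose columns are the vectors in $A$; but the incidence matrix of a graph is totally unimodular, so each such minor lies in $\{0,\pm1\}$ and therefore $m(A)=1$ for every $A$. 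Substituting $m\equiv 1$ into the definition \eqref{eq:aritutte} turns the sum into the defining sum of $T_{\mathcal{G}}(x,y)$, so $M_{\mathcal{G},\ell}=T_{\mathcal{G}}$.

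Combining these facts, Theorem~\ref{thm:maincolor} gives
$$
{\chi}_{\mathcal{G}}(q)={\chi}_{\mathcal{G},\ell}(q)=(-1)^{|\overline{V}|-k}q^{k}M_{\mathcal{G},\ell}(1-q,0)=(-1)^{|V|-k}q^{k}T_{\mathcal{G}}(1-q,0),
$$
which is the asserted identity. I do not anticipate a genuine obstacle here: the argument is essentially a dictionary check showing that all the arithmetic notions degenerate to their classical counterparts, and the only input beyond unwinding definitions is the total unimodularity of the graph incidence matrix, which is what forces every multiplicity to equal $1$.
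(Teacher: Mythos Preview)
Your proposal is correct and follows the same approach as the paper: deduce the corollary from Theorem~\ref{thm:maincolor} by taking $D=\emptyset$, $\ell\equiv 1$, so that $\overline{\mathcal{G}}=\mathcal{G}$, arithmetic colorings become ordinary proper colorings, and all multiplicities equal~$1$, collapsing $M_{\mathcal{G},\ell}$ to $T_{\mathcal{G}}$. The paper simply asserts that the multiplicities are all~$1$ in this case, whereas you supply the reason (total unimodularity of the oriented incidence matrix via Remark~\ref{rm:GCD}); this is a welcome elaboration but not a different argument.
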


\section{Proof of Theorem \ref{thm:maincolor}}

To prove Theorem \ref{thm:maincolor} we need the following lemma,
which is immediate from the definitions.
\begin{Lem}
Let $(\mathcal{G},\ell)$ be a labelled graph and let $q$ be an
admissible integer. For a regular edge $e\in R$ we have
$$
{\chi}_{\mathcal{G},\ell}(q)= {\chi}_{\mathcal{G}- e,\ell}(q)-
{\chi}_{\mathcal{G}/e,\ell}(q).
$$
\end{Lem}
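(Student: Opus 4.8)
The plan is to unwind the definitions of arithmetic $q$-coloring on both sides of the claimed identity and exhibit an explicit bijection-with-sign-bookkeeping between the colorings of $(\mathcal{G},\ell)$ and the union of the colorings of its deletion and contraction. Fix a regular edge $e=\{u,v\}\in R$ and an admissible $q$. First I would observe that both $\mathcal{G}-e$ and $\mathcal{G}/e$ have the same vertex set $V$ as $\mathcal{G}$, and that $\ell_1$ (resp.\ $\ell_2$) is just the restriction of $\ell$; in particular $q$ remains admissible for both. An arithmetic $q$-coloring of $\mathcal{G}-e$ is by definition a map $c:V\to\mathbb{Z}/q\mathbb{Z}$ satisfying the inequality condition $\ell(e')c(x)\neq\ell(e')c(y)$ for every $e'=\{x,y\}\in R\setminus\{e\}$ and the equality condition for every $e'\in D$; that is exactly a coloring of $\mathcal{G}$ with the single constraint coming from $e$ dropped.

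Next I would classify the colorings of $\mathcal{G}-e$ according to whether they happen to satisfy $\ell(e)c(u)=\ell(e)c(v)$ or not. Those that violate it (i.e.\ satisfy $\ell(e)c(u)\neq\ell(e)c(v)$) are precisely the arithmetic $q$-colorings of $\mathcal{G}$, since they meet every constraint of $R$ including the one for $e$ and every constraint of $D$. Those that satisfy the equality $\ell(e)c(u)=\ell(e)c(v)$ are precisely the maps satisfying all the constraints of $R\setminus\{e\}$, all the constraints of $D$, \emph{and} the equality constraint associated with the edge $e$; but the latter is exactly the defining condition for $e$ regarded as a dotted edge, so these are precisely the arithmetic $q$-colorings of $\mathcal{G}/e$ (recall that contraction of a regular edge in this setting means moving $e$ from $R$ into $D$). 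Since these two classes partition the colorings of $\mathcal{G}-e$, counting gives
$$
{\chi}_{\mathcal{G}- e,\ell}(q)={\chi}_{\mathcal{G},\ell}(q)+{\chi}_{\mathcal{G}/e,\ell}(q),
$$
which rearranges to the asserted identity.

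There is essentially no obstacle here; the only thing to be slightly careful about is the bookkeeping of which graph carries which set of constraints, and the fact that the vertex set (hence the ambient space $(\mathbb{Z}/q\mathbb{Z})^V$ of candidate colorings) is unchanged by deletion and by the operation $\mathcal{G}\mapsto\mathcal{G}/e$ used in this paper — it is this feature of the modified contraction (pushing $e$ into $D$ rather than identifying its endpoints) that makes the three coloring-counts live over the same set and the deletion--contraction recursion a clean inclusion--exclusion on a single constraint. So the proof is just the remark that adding the constraint $\ell(e)c(u)\neq\ell(e)c(v)$ or its complement to the list of constraints splits the solution set, which is why the statement is labelled ``immediate from the definitions.''
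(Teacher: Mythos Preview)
Your argument is correct and is exactly the unpacking of ``immediate from the definitions'' that the paper alludes to: since in this paper contraction just moves $e$ from $R$ to $D$ without changing $V$, the colorings of $\mathcal{G}-e$ split according to whether $\ell(e)c(u)=\ell(e)c(v)$ into the colorings of $\mathcal{G}/e$ and of $\mathcal{G}$. There is nothing to add.
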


We want to prove that our polynomial
$\widetilde{{\chi}}_{\mathcal{G},\ell}(q)$ satisfies the same
recursion.
\begin{Lem}
Let $(\mathcal{G},\ell)$ be a labelled graph and let $q$ be an
admissible integer. For a regular edge $e\in R$ we have
$$
\widetilde{{\chi}}_{\mathcal{G},\ell}(q)=
\widetilde{{\chi}}_{\mathcal{G}- e,\ell}(q)-
\widetilde{{\chi}}_{\mathcal{G}/e,\ell}(q).
$$
\end{Lem}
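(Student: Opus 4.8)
The plan is to prove the recursion
$$
\widetilde{{\chi}}_{\mathcal{G},\ell}(q)=
\widetilde{{\chi}}_{\mathcal{G}- e,\ell}(q)-
\widetilde{{\chi}}_{\mathcal{G}/e,\ell}(q)
$$
by reducing it, via the definition $\widetilde{{\chi}}_{\mathcal{G},\ell}(q)=(-1)^{|\overline{V}|-k}q^k M_{\mathcal{G},\ell}(1-q,0)$, to a statement about arithmetic Tutte polynomials, and then invoking Theorem \ref{thm:properties}. The key observation is that deleting a regular edge $e\in R$ from $(\mathcal{G},\ell)$ corresponds, in the associated arithmetic matroid $\mathfrak{M}_{\mathcal{G},\ell}=\mathfrak{M}_{X_R}$, to deleting the vector $x_e$, while contracting $e$ (i.e.\ moving it from $R$ to $D$) corresponds to contracting $x_e$ in the matroid sense — this is exactly parallel to the classical graphic case, and should follow directly from the construction in Section 2.4 together with the descriptions of matroid deletion/contraction given there. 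So I would first record that $M_{\mathcal{G}-e,\ell}(x,y)$ and $M_{\mathcal{G}/e,\ell}(x,y)$ are the deletion and contraction polynomials $M_{X_1}(x,y)$ and $M_{X_2}(x,y)$ of $M_{X_R}(x,y)$ with respect to $v=x_e$.

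Next I would determine the type of the vector $x_e$ in $\mathfrak{M}_{\mathcal{G},\ell}$. Since $e=\{u,w\}\in R$ is not a loop, the edge $e$ is either a bridge (isthmus) of $\overline{\mathcal{G}}$ or it lies on a cycle; correspondingly $x_e$ is either a free vector or a proper vector in the arithmetic matroid — it is never torsion, because a regular edge always contributes a genuinely new rank one direction unless its two endpoints are already identified in $\overline{\mathcal{G}}$, in which case $e$ would be a loop of $\overline{\mathcal{G}}$, and one checks that the construction still makes $x_e$ torsion-free (the label is nonzero). I will need to handle these two cases and track the bookkeeping constants $(-1)^{|\overline{V}|-k}q^k$ carefully. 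Write $\overline{\mathcal{G}}=(\overline{V},\overline{E})$, $\overline{\mathcal{G}-e}=(\overline{V}_1,\overline{E}_1)$, $\overline{\mathcal{G}/e}=(\overline{V}_2,\overline{E}_2)$, with component counts $k$, $k_1$, $k_2$.

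Case $e$ a proper vector (i.e.\ $e$ not a bridge of $\overline{\mathcal{G}}$): then $\overline{\mathcal{G}-e}$ has the same vertex set and the same number of components as $\overline{\mathcal{G}}$, so $|\overline{V}_1|=|\overline{V}|$ and $k_1=k$, while $\overline{\mathcal{G}/e}$ has one fewer vertex, $|\overline{V}_2|=|\overline{V}|-1$, and still $k_2=k$ components. Theorem \ref{thm:properties}(1) gives $M_{X_R}=M_{X_1}+M_{X_2}$; evaluating at $(1-q,0)$ and multiplying through, the sign mismatch $(-1)^{|\overline{V}|-k}$ versus $(-1)^{|\overline{V}|-1-k}=-(-1)^{|\overline{V}|-k}$ produces exactly the minus sign in front of $\widetilde{{\chi}}_{\mathcal{G}/e,\ell}$, as desired. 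Case $e$ a free vector (i.e.\ $e$ a bridge of $\overline{\mathcal{G}}$): then deleting $e$ disconnects one component, so $k_1=k+1$ and $|\overline{V}_1|=|\overline{V}|$, whereas contracting $e$ gives $|\overline{V}_2|=|\overline{V}|-1$, $k_2=k$. Theorem \ref{thm:properties}(2) gives $M_{X_R}=(x-1)M_{X_1}+M_{X_2}$; at $x=1-q$ the factor $(x-1)=-q$ combines with $q^{k_1}=q^{k+1}$ to give $-q\cdot q^{k}\cdot(\text{stuff})$, so the extra $-q$ exactly cancels against… — here one checks the signs and powers of $q$ line up, again yielding the claimed identity. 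The main obstacle is the careful verification that $x_e$ is never a torsion vector and the precise matching of the combinatorial quantities $|\overline{V}|$, $k$ under deletion and contraction with the algebraic free/proper dichotomy; once that dictionary is in place the rest is a short substitution.
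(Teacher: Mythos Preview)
Your overall strategy matches the paper's, but there is a genuine gap: you incorrectly rule out the torsion case. A regular edge $e=\{u,w\}$ becomes a loop in $\overline{\mathcal{G}}$ precisely when $u$ and $w$ are joined by a path of dotted edges, and this can certainly happen (nothing in the hypotheses prevents it). In that situation the coset $x_e$ \emph{is} a torsion element of $G=\mathbb{Z}^n/\langle X_D\rangle$: some multiple of $\ell(e)(\mathbf{e}_u-\mathbf{e}_w)$ lies in $\langle X_D\rangle$, so $x_e$ has finite order in $G$ and hence rank zero in the matroid. Your remark that ``the label is nonzero'' only shows $x_e\neq 0$, which is not the same as torsion-free in a group with torsion.

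So you need a third case, exactly the one the paper treats: when $x_e$ is torsion, Theorem~\ref{thm:properties}(3) gives $M_{X_R}(1-q,0)=M_{X_1}(1-q,0)-M_{X_2}(1-q,0)$, and since contracting a loop of $\overline{\mathcal{G}}$ does not merge any vertices one has $|\overline{V}_2|=|\overline{V}|$, $|\overline{V}_1|=|\overline{V}|$, $k_1=k_2=k$; the bookkeeping then yields the claimed recursion. Once you add this case, your argument is the same as the paper's.
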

\begin{proof}
We distinguish three cases.

\underline{Case 1}: $e$ is a proper edge, i.e. the corresponding
edge in $\overline{\mathcal{G}}$ is not a loop and it is contained
in a circuit. Then, applying Theorem \ref{thm:properties} (1), we
have
\begin{eqnarray*}
\widetilde{{\chi}}_{\mathcal{G},\ell}(q) & = &
(-1)^{|\overline{V}|-k}q^kM_{\mathcal{G}}(1-q,0)\\
 & = &(-1)^{|\overline{V}|-k}q^k(M_{\mathcal{G}- e}(1-q,0)+
 M_{\mathcal{G}/e}(1-q,0))\\
 & = & (-1)^{|\overline{V}_1|-k}q^{k}M_{\mathcal{G}- e}(1-q,0)-
 (-1)^{|\overline{V}_2|-k}q^{k}M_{\mathcal{G}/e}(1-q,0)\\
 & = & \widetilde{{\chi}}_{\mathcal{G}- e,\ell}(q)-
\widetilde{{\chi}}_{\mathcal{G}/e,\ell}(q),
\end{eqnarray*}
since $|\overline{V}_1|=|\overline{V}|$ and
$|\overline{V}_2|=|\overline{V}|-1$.

\underline{Case 2}: $e$ is a free edge, i.e. the corresponding
edge in $\overline{\mathcal{G}}$ is not contained in a circuit and
is not a loop. Then, applying Theorem \ref{thm:properties} (2), we
have
\begin{eqnarray*}
\widetilde{{\chi}}_{\mathcal{G},\ell}(q) & = &
(-1)^{|\overline{V}|-k}q^kM_{\mathcal{G}}(1-q,0)\\
 & = &(-1)^{|\overline{V}|-k}q^k(-qM_{\mathcal{G}- e}(1-q,0)+
 M_{\mathcal{G}/e}(1-q,0))\\
 & = & (-1)^{|\overline{V}_1|-(k+1)}q^{k+1}M_{\mathcal{G}- e}(1-q,0)-
 (-1)^{|\overline{V}_2|-k}q^kM_{\mathcal{G}/e}(1-q,0)\\
 & = & \widetilde{{\chi}}_{\mathcal{G}- e,\ell}(q)-
\widetilde{{\chi}}_{\mathcal{G}/e,\ell}(q),
\end{eqnarray*}
since $\mathcal{G}- e$ has now one extra connected component,
$|\overline{V}_1|=|\overline{V}|$, and
$|\overline{V}_2|=|\overline{V}|-1$.

\underline{Case 3}: $e$ is a torsion edge, i.e. the corresponding
edge in $\overline{\mathcal{G}}$ is a loop. Then, applying Theorem
\ref{thm:properties} (3), we have
\begin{eqnarray*}
\widetilde{{\chi}}_{\mathcal{G},\ell}(q) & = &
(-1)^{|\overline{V}|-k}q^kM_{\mathcal{G}}(1-q,0)\\
 & = &(-1)^{|\overline{V}|-k}q^k(M_{\mathcal{G}- e}(1-q,0)-
 M_{\mathcal{G}/e}(1-q,0))\\
 & = & (-1)^{|\overline{V}_1|-k}q^{k}M_{\mathcal{G}- e}(1-q,0)-
 (-1)^{|\overline{V}_2|-k}q^kM_{\mathcal{G}/e}(1-q,0)\\
 & = & \widetilde{{\chi}}_{\mathcal{G}- e,\ell}(q)-
\widetilde{{\chi}}_{\mathcal{G}/e,\ell}(q),
\end{eqnarray*}
since $|\overline{V}_1|=|\overline{V}|$ and
$|\overline{V}_2|=|\overline{V}|$.
\end{proof}

In this way we reduce the proof of Theorem \ref{thm:maincolor} to
the case where there are no regular edges. For this case, first of
all we reduce ourself to the case of a connected graph: suppose
that our graph $\mathcal{G}$ has $k$ connected components
$\mathcal{G}^{(1)},\mathcal{G}^{(2)},\dots,\mathcal{G}^{(k)}$ with
the corresponding labellings
$\ell^{(1)},\ell^{(2)},\dots,\ell^{(k)}$. In this case the matroid
$\mathfrak{M}_{\mathcal{G},\ell}$ is the direct sum of the
matroids $\mathfrak{M}_{\mathcal{G}^{(1)},\ell^{(1)}},
\mathfrak{M}_{\mathcal{G}^{(2)},\ell^{(2)}},\dots,
\mathfrak{M}_{\mathcal{G}^{(k)},\ell^{(k)}}$ (cf. Remark
\ref{rm:directsum}). Since $\overline{\mathcal{G}^{(i)}}$ consists
of a single vertex with no edges for $i=1,2,\dots,k$, we have
$|\overline{V}|=k$. Therefore, assuming the result for a connected
graph, we have
\begin{eqnarray*}
(-1)^{|\overline{V}|-k}q^kM_{\mathcal{G},\ell}(1-q,0) & = &
q^kM_{\mathcal{G},\ell}(1-q,0)\\
\text{(by Theorem \ref{thm:properties} (4))} & = &
q^k\cdot\prod_{i=1}^k
M_{\mathcal{G}^{(i)},\ell^{(i)}}(1-q,0)\\
 & = & \prod_{i=1}^k q\cdot
M_{\mathcal{G}^{(i)},\ell^{(i)}}(1-q,0)\\
\text{(by assumption on connected graphs)} & = & \prod_{i=1}^k
\chi_{\mathcal{G}^{(i)},\ell^{(i)}}(q)=
\chi_{\mathcal{G},\ell}(q),
\end{eqnarray*}
where the last equality is clear from the definition of arithmetic
chromatic polynomial.

So we are left to prove the following lemma.
\begin{Lem}
Let $(\mathcal{G},\ell)$ be a labelled connected graph with no
regular edges and let $q$ be an admissible integer. We have
$$
\chi_{\mathcal{G},\ell}(q)=\widetilde{{\chi}}_{\mathcal{G},\ell}(q).
$$
\end{Lem}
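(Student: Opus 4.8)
The plan is to compute both sides of the claimed identity directly, exploiting the fact that the graph $\mathcal{G}$ is connected and has no regular edges, so that $X_R=\emptyset$ and the arithmetic matroid $\mathfrak{M}_{\mathcal{G},\ell}$ is the matroid on the empty list with $rk=0$ and a single multiplicity value $m(\emptyset)$. For such a matroid the arithmetic Tutte polynomial is the constant $M_{\mathcal{G},\ell}(x,y)=m(\emptyset)$, so that $\widetilde{\chi}_{\mathcal{G},\ell}(q)=(-1)^{|\overline V|-k}q^k m(\emptyset)$. Since $\mathcal{G}$ is connected we have $k=1$ and the contracted graph $\overline{\mathcal{G}}$ is connected with a single vertex, so $|\overline V|=1$ and the sign is $+1$; hence the right-hand side equals $q\cdot m(\emptyset)$. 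It therefore suffices to show that the number of arithmetic $q$-colorings of $(\mathcal{G},\ell)$ equals $q\cdot m(\emptyset)$.

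Next I would identify $m(\emptyset)$ concretely. By construction $m(\emptyset)$ is the order of the torsion subgroup of $G=\mathbb{Z}^n/\langle X_D\rangle$ (the multiplicity of the empty list is the index of $\langle\emptyset\rangle=0$ in the maximal finite-index-over-$0$ subgroup, i.e. the torsion part of $G$), which by Remark \ref{rm:GCD} can be read off from the Smith normal form of the matrix whose columns are the vectors $x_e$, $e\in D$. On the coloring side, an arithmetic $q$-coloring is a map $c:V\to\mathbb{Z}/q\mathbb{Z}$ such that $\ell(e)\,c(u)=\ell(e)\,c(v)$ for every dotted edge $e=\{u,v\}$; equivalently, thinking of $c$ as an element of $(\mathbb{Z}/q\mathbb{Z})^n$, the condition says that $c$ lies in the kernel of the homomorphism $(\mathbb{Z}/q\mathbb{Z})^n\to(\mathbb{Z}/q\mathbb{Z})^D$ given by the transpose of the matrix of the $x_e$. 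The key point is then a counting lemma: for an integer matrix $B$ all of whose entries lie in labels dividing $q$, the number of $c\in(\mathbb{Z}/q\mathbb{Z})^n$ with $B^{T}c=0$ in $(\mathbb{Z}/q\mathbb{Z})^D$ can be computed from the Smith normal form of $B$, and admissibility of $q$ guarantees that this count is exactly $q$ (one free parameter, since $\mathcal{G}$ is connected so $\overline{\mathcal{G}}$ has one vertex, i.e. $\langle X_D\rangle$ has corank $1$ in $\mathbb{Z}^n$) times the number of torsion classes one can correct, namely $|G_{\mathrm{tors}}|=m(\emptyset)$.

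Concretely, I would diagonalize: choose a change of basis over $\mathbb{Z}$ on $\mathbb{Z}^n$ and on $\mathbb{Z}^D$ bringing the matrix of the $x_e$ to Smith normal form $\mathrm{diag}(d_1,\dots,d_r)$ with $r=n-1=\mathrm{rk}\langle X_D\rangle$ and $d_1\mid\cdots\mid d_r$; then $m(\emptyset)=d_1\cdots d_r/(\text{the part that is a unit})$—more precisely $m(\emptyset)=\prod_i d_i$ divided by nothing, as these are exactly the invariant factors giving $G_{\mathrm{tors}}\cong\bigoplus\mathbb{Z}/d_i\mathbb{Z}$. After this change of basis the coloring equations decouple into $r$ scalar congruences $d_i\,y_i\equiv 0\ (\mathrm{mod}\ q)$ together with one unconstrained coordinate $y_{r+1}$; the number of solutions of $d_i y_i\equiv 0$ is $\gcd(d_i,q)$, so the total count is $q\cdot\prod_{i=1}^r\gcd(d_i,q)$. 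The admissibility hypothesis "$\ell(e)\mid q$ for all $e$" must be invoked here to conclude that each $d_i$ divides $q$ (the $d_i$ are built from gcd's of minors of a matrix whose nonzero entries are $\pm\ell(e)$, hence each $d_i\mid q$ once all $\ell(e)\mid q$), whence $\gcd(d_i,q)=d_i$ and the count becomes $q\cdot\prod_i d_i=q\cdot m(\emptyset)=\widetilde{\chi}_{\mathcal{G},\ell}(q)$, as desired. The main obstacle is the careful bookkeeping in this Smith-normal-form reduction: one must check that admissibility really does force $d_i\mid q$ for every invariant factor (not merely for the product), and that the corank-$1$ claim $r=n-1$ follows correctly from connectedness of $\mathcal{G}$ via connectedness of $\overline{\mathcal{G}}$; everything else is routine linear algebra over $\mathbb{Z}/q\mathbb{Z}$.
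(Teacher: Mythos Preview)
Your overall strategy is correct and is a cleaner, more direct variant of the paper's argument. Both you and the paper reduce to showing $\chi_{\mathcal{G},\ell}(q)=q\cdot m(\emptyset)$ by identifying the arithmetic $q$-colorings with the kernel of the map $(\mathbb{Z}/q\mathbb{Z})^n\to(\mathbb{Z}/q\mathbb{Z})^{|D|}$ induced by $[X_D]$. The paper then computes $|\mathrm{Ker}|$ via the cokernel, rewriting $|(\mathbb{Z}/q\mathbb{Z})^m:\mathrm{Im}[X_D]|$ as the index $|\mathbb{Z}^m:\mathrm{Im}[X_D\cup Q]|$ for the augmented matrix obtained by adjoining the vectors $q\cdot e_i$, and evaluates that index as $m(\emptyset)\cdot q^{m-n+1}$ by analysing which maximal minors contribute to the GCD. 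Your Smith normal form computation is more transparent: after diagonalising, the count is visibly $q\cdot\prod_i\gcd(d_i,q)$, and everything hinges on $d_i\mid q$.

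There is, however, a genuine gap in your justification of $d_i\mid q$. The parenthetical argument (``the $d_i$ are built from gcd's of minors of a matrix whose nonzero entries are $\pm\ell(e)$, hence each $d_i\mid q$'') is false in general: for instance the matrix $\left(\begin{smallmatrix}2&4\\4&2\end{smallmatrix}\right)$ has every entry dividing $q=4$, yet its invariant factors are $d_1=2$, $d_2=6$, and $6\nmid 4$. What actually makes the claim true here is the graph structure, not the bound on the entries. Since $\mathcal{G}$ is connected, the saturation of $\langle X_D\rangle$ in $\mathbb{Z}^n$ is the hyperplane $\{\sum_i c_i=0\}$, generated by the differences $e_i-e_j$. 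For any vertices $i,j$ choose a path $f_1,\dots,f_k$ in $\mathcal{G}$ from $v_i$ to $v_j$; then
\[
q(e_i-e_j)=\sum_{s=1}^k \epsilon_s\,\frac{q}{\ell(f_s)}\cdot x_{f_s}\in\langle X_D\rangle,
\]
the coefficients $q/\ell(f_s)$ being integers precisely by admissibility. Thus $q$ annihilates the torsion of $\mathbb{Z}^n/\langle X_D\rangle$, i.e.\ the largest invariant factor $d_r$ divides $q$, and hence every $d_i$ does. With this correction your argument goes through; it is essentially equivalent to the paper's but packages the use of admissibility more explicitly through the invariant factors rather than through a minor-by-minor analysis of the augmented matrix.
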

\begin{proof}
First of all notice that
$$
\widetilde{{\chi}}_{\mathcal{G},\ell}(q)=qM_{\mathcal{G},\ell}(1-q,0)
=q\cdot m(\emptyset),
$$
where $m(\emptyset)$ is the cardinality of the torsion subgroup of
the group $\mathbb{Z}^{|V|}/\langle X_D\rangle$, i.e. the GCD of
the maximal rank nonzero minors of the matrix $[X_D]$ whose
columns are the elements of $X_D$ (cf. Remark \ref{rm:GCD}).

To clarify the general idea, let us start with the special case of
$\mathcal{G}$ being a tree.

The number of arithmetic coloring in this case is $q$ times the
product of the labels involved. To see this we can proceed by
induction: when there is only one vertex is clear; when we add an
edge $e$ with a label $\ell(e)$ we simply have $\ell(e)$ many
choices for the new vertex. Here we are using that $\ell(e)$
divides $q$ for every edge $e\in E$ ($=D$ in this case), since in
general we would have $GCD(\ell(e),q)$ many choices for the new
edge $e$.

In this case ($\mathcal{G}$ is a tree) $m(\emptyset)$ is just the
product of the labels of the edges: in fact we have an independent
set, hence we are just computing the determinant of the matrix
with the vectors in $X_D$. The fact that we get the product of the
labels is now clear from the form of the matrix $[X_D]$.

Let us consider now the general case, where $\mathcal{G}$ is not
necessarily a tree.

To compute the number of arithmetic $q$-colorings, let
$m:=|X_D|=|D|$. If $n=|V|$, then $m\geq n-1$ (we are assuming that
$\mathcal{G}$ is connected). We can look at $[X_D]$ as a linear
operator (acting on the right)
$$
[X_D]:(\mathbb{Z}/q\mathbb{Z})^n\rightarrow
(\mathbb{Z}/q\mathbb{Z})^m.
$$
Then the elements of the kernel correspond naturally to the
arithmetic $q$-colorings, hence we need to compute the cardinality
of $Ker [X_D]$. Looking at the exact sequence (i.e. the image of
each homomorphism is the kernel of the successive one)
$$
0 \rightarrow Ker [X_D]\mathop{\hookrightarrow}^{\iota}
(\mathbb{Z}/q\mathbb{Z})^n \mathop{\longrightarrow}^{[X_D]}
(\mathbb{Z}/q\mathbb{Z})^m \mathop{\longrightarrow}^{\pi}
\frac{(\mathbb{Z}/q\mathbb{Z})^m}{Im [X_D]}\rightarrow 0,
$$
where $\iota$ and $\pi$ are the inclusion and the natural map to
the quotient respectively, we realize that
$(\mathbb{Z}/q\mathbb{Z})^n/Ker [X_D]\cong Im [X_D]$ implies
$$
\frac{|(\mathbb{Z}/q\mathbb{Z})^m|}{|Im
[X_D]|}=\frac{|(\mathbb{Z}/q\mathbb{Z})^m|}{|(\mathbb{Z}/q\mathbb{Z})^n|}\cdot
|Ker [X_D]|= q^{m-n} \cdot |Ker [X_D]|.
$$
Now by the isomorphism theorem
$$
|(\mathbb{Z}/q\mathbb{Z})^m:Im [X_D]|=|\mathbb{Z}^m:Im [X_D\cup
Q]|,
$$
where $Q=\{q_1,q_2,\dots,q_m\}$, $q_i$ is the vector of
$\mathbb{Z}^m$ with $1$ in the $i$-th position and $0$ elsewhere,
and we see the matrix $[X_D\cup Q]$ (which consists of a $n\times
m$ block on the top whose columns are the elements of $X_D$, and a
$m\times m$ block on the bottom whose columns are the elements of
$Q$) as a linear operator (acting on the right)
$$
[X_D\cup Q]:\mathbb{Z}^{m+n}\rightarrow \mathbb{Z}^{m}.
$$
But when we compute the right-hand side of the last equation  we
get $m(\emptyset)\cdot q^{m-n+1}$. In fact by Remark \ref{rm:GCD}
this the $GCD$ of the minors of maximal rank of $[X_D\cup Q]$.
But, since $\ell(e)$ divides $q$ for every edge $e\in E$ ($=D$ in
this case), it is enough to compute the minors of maximal rank
which involve the edges of a spanning tree plus some extra columns
from $Q$ (the other ones are multiples of these). Therefore we get
$q^{m-n+1}$ times the $GCD$ of minors of rank $n-1$ in $[X_D]$,
which are the ones corresponding to spanning trees. But this last
number is exactly $m(\emptyset)$.

From all this we get
$$
\chi_{\mathcal{G},\ell}(q)=|Ker([X_D])|=q\cdot m(\emptyset)=q\cdot
M_{\mathcal{G},\ell}(1-q,0)=\widetilde{\chi}_{\mathcal{G},\ell}(q),
$$
as we wanted.
\end{proof}

\section{Arithmetic flows}

In this section we discuss the notion of arithmetic flow.

\subsection{Definitions}

Given a labelled graph $(\mathcal{G},\ell)$, fix an orientation
$\theta$ and call $\mathcal{G}_{\theta}$ the associated directed
graph. Let $q$ be a positive integer, and set
$H:=\mathbb{Z}/q\mathbb{Z}$.

\bigskip

\textit{We fix the above notation throughout the all section.}

\bigskip

We associate to each oriented edge $e:=(u,v)\in E_{\theta}$ a
weight $w(e)\in H$. For a vertex $v\in V$, we set
$$
u(v):= \mathop{\sum_{e^+= v}}_{e\in E_{\theta}}\ell(e)\cdot
w(e)-\mathop{\sum_{e^-= v}}_{e\in E_{\theta}}\ell(e)\cdot w(e)\in
H.
$$
The function $w:E\rightarrow H$ is called a \textit{arithmetic
(nowhere zero) $q$-flow} if the following conditions hold:
\begin{enumerate}
    \item for every vertex $v\in V$,
    $$
    u(v)=0\in H;
    $$
    \item for every regular edge $e\in R_{\theta}$,
    $$
    w(e)\neq 0\in H.
    $$
\end{enumerate}

Again, for our results we will need to restrict ourself to
\textit{admissible} $q$'s, i.e. $\ell(e)$ divides $q$ for every
$e\in E$ (cf. Remark \ref{rm:conditionflow}).
\begin{Rem}
For a trivial labelling $\ell\equiv 1$ and $D=\emptyset$, clearly
we just recover the usual notion of \textit{(nowhere zero)
$q$-flow} (any $q$ will be admissible now) of the underlying
graph.
\end{Rem}

\begin{Ex} \label{ex:flow}
Consider $(\mathcal{G},\ell)$ as in Example \ref{ex:color}, and
fix the orientation
$R_{\theta}:=\{e_1:=(v_1,v_2),e_2:=(v_2,v_1)\}$,
$D_{\theta}:=\{e_3:=(v_2,v_3)\}$ so that $E_{\theta}=R_{\theta}
\cup D_{\theta}$ (see Figure 9).

\begin{figure}[h]
\includegraphics[width=60mm,clip=true,trim=10mm 230mm 30mm 10mm]{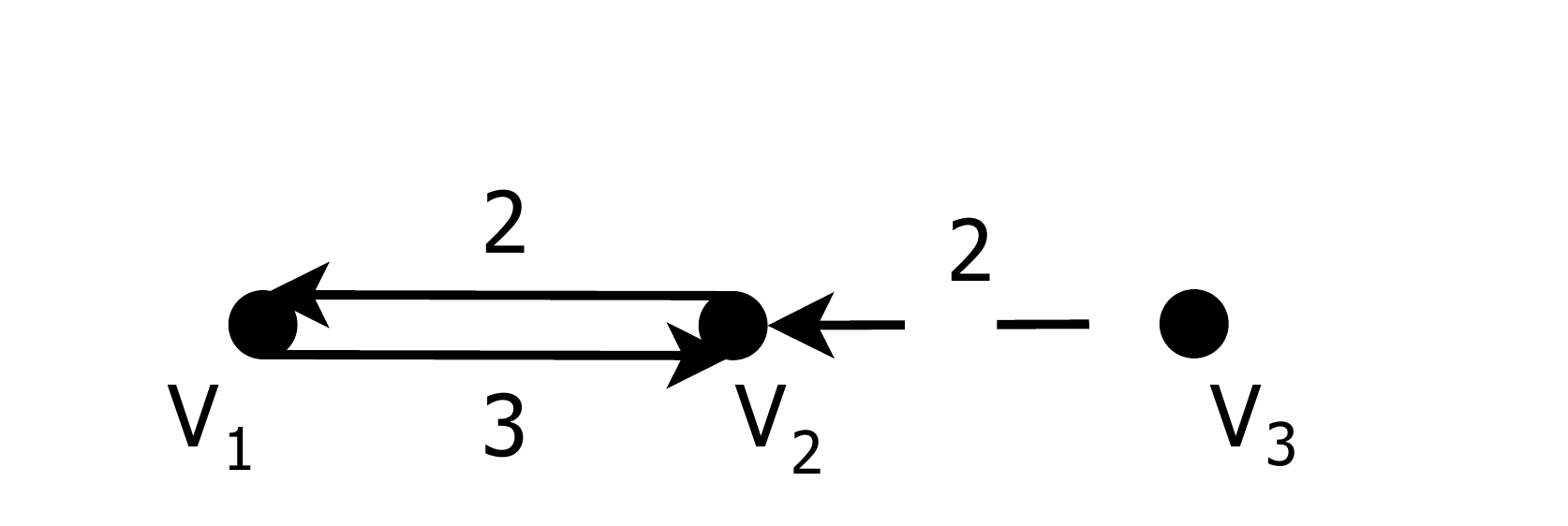}
\caption{The labelled graph $(\mathcal{G},\ell)$ with orientation
$E_{\theta}$.}
\end{figure}

Then any multiple of $6$ is admissible. For example for $q=6$ we
denote the $6$-flows as vectors of $(\mathbb{Z}/6\mathbb{Z})^3$,
where for every $i=1,2,3$, the $i$-th coordinate corresponds to
the weight of the edge $e_i$.

In this case there are $4$ possible $6$-flows of
$(\mathcal{G},\ell)$: $(\overline{3},\overline{2},\overline{0})$,
$(\overline{3},\overline{2},\overline{3})$,
$(\overline{3},\overline{4},\overline{0})$,
$(\overline{3},\overline{4},\overline{3})$.
\end{Ex}
We define the \textit{arithmetic flow polynomial} of a labelled
graph $(\mathcal{G},\ell)$ to be the function
${\chi}_{\mathcal{G},\ell}^*(\cdot): \mathbb{N}\setminus
\{0\}\rightarrow \mathbb{N}\cup \{0\}$, which assign to each
positive integer $q$ the number of arithmetic $q$-flows of
$(\mathcal{G},\ell)$. We will show in Theorem \ref{thm:mainflow}
that this is in fact a polynomial function.
\begin{Rem} \label{rm:trivialflow}
For a trivial labelling $\ell\equiv 1$ and $D=\emptyset$, clearly
we just recover the usual notion of \textit{flow polynomial}
${\chi}_{\mathcal{G}}^*(q)$ of the underlying graph $\mathcal{G}$.
\end{Rem}
\begin{Ex}
Consider $(\mathcal{G},\ell)$ as in Example \ref{ex:flow}. For the
vertex $v_3$ we have the equation $2w(e_3)=0\in H$, for which we
have only the two solutions $\overline{0}$ and $\overline{q/2}$.
In both cases they don't contribute the the conditions on the
other two vertices. Therefore, for both vertices $v_1$ and $v_2$
we get the equation $2w(e_1)-3w(e_2)=0\in H$, which has $q$
solutions: for every value of $w(e_2)$ of the form
$w(e_2)=2\overline{a}\in H$ there are exactly two values of
$w(e_1)$ which give a solution, while for the other values of
$w(e_2)$ there are no solutions.

But the inequalities $w(e_1)\neq 0\in H$ and $w(e_2)\neq 0\in H$
exclude the four possibilities $(\overline{0},\overline{0})$,
$(\overline{0},\overline{q/3})$, $(\overline{0},\overline{2q/3})$,
$(\overline{q/2},\overline{0})$ for $(w(e_1),w(e_2))$. Hence we
have $q-4$ solutions.

In conclusion, $\chi_{\mathcal{G},\ell}^*(q)=2(q-4)=2q-8$, which
agrees with what we found for $q=6$
($\chi_{\mathcal{G},\ell}^*(6)=4$).
\end{Ex}
\begin{Lem}
${\chi}_{\mathcal{G,\ell}}^*(q)$ does not depend on the
orientation that we choose.
\end{Lem}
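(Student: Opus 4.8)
The plan is to show that reversing the orientation of a single edge $e_0 \in E_\theta$ gives a bijection between arithmetic $q$-flows for the two orientations, and since any orientation is reached from any other by a finite sequence of single-edge reversals, this suffices. Throughout, fix $q$ and write $H = \mathbb{Z}/q\mathbb{Z}$ as in the section.

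Here are the steps. First I would let $\theta'$ denote the orientation obtained from $\theta$ by reversing the single edge $e_0$, so $e_0^+$ and $e_0^-$ are swapped while every other edge is unchanged. Given a weight assignment $w : E \to H$ for $\mathcal{G}_\theta$, define $w' : E \to H$ by $w'(e_0) := -w(e_0)$ and $w'(e) := w(e)$ for $e \neq e_0$. Next I would check that the vertex conditions are preserved: for a vertex $v$ not incident to $e_0$, the sum $u(v)$ computing the divergence is literally unchanged, since only the term coming from $e_0$ could differ and that term is absent. For $v$ incident to $e_0$ — say $v = e_0^+$ in $\theta$, hence $v = (e_0)^-$ in $\theta'$ — the contribution of $e_0$ to $u(v)$ in $\theta$ is $+\ell(e_0)\cdot w(e_0)$, while in $\theta'$ it is $-\ell(e_0)\cdot w'(e_0) = -\ell(e_0)\cdot(-w(e_0)) = +\ell(e_0)\cdot w(e_0)$; all other terms in $u(v)$ are identical. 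So $u(v)$ is unchanged at every vertex, and condition (1) holds for $w'$ with respect to $\theta'$ exactly when it holds for $w$ with respect to $\theta$.

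Then I would verify the nowhere-zero condition (2). If $e_0 \in R_\theta$, then in $\theta'$ it is still a regular edge, and $w'(e_0) = -w(e_0) \neq 0$ in $H$ if and only if $w(e_0) \neq 0$; for the other regular edges $w' = w$, so (2) is preserved. If $e_0 \in D_\theta$, there is no constraint on $w(e_0)$ at all, and again the regular edges are untouched. Finally, the map $w \mapsto w'$ is visibly an involution (applying it twice negates $w(e_0)$ twice), hence a bijection between the set of arithmetic $q$-flows of $\mathcal{G}_\theta$ and the set of arithmetic $q$-flows of $\mathcal{G}_{\theta'}$. This shows the count is invariant under a single-edge reversal, and therefore under any change of orientation, which is the claim.

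I do not expect a serious obstacle here: the only point requiring a little care is the bookkeeping of signs at the two endpoints of $e_0$ in the definition of $u(v)$, making sure that swapping $e^+$ and $e^-$ together with negating $w(e_0)$ leaves each incident $u(v)$ fixed rather than, say, changing its sign. One should also note explicitly that the labels $\ell(e)$ play no role in the argument — they are attached to the unoriented edge and are unaffected by reversal — so the $\ell$-weighting does not interfere. Strictly speaking the lemma as stated presupposes $q$ admissible only implicitly through the surrounding setup, but the bijection above works for every positive integer $q$, so nothing extra is needed.
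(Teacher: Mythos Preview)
Your argument is correct and is exactly the approach the paper takes: the paper's proof is the single sentence that reversing one edge can be compensated by negating the corresponding weight in $H$. You have simply spelled out in detail the sign-bookkeeping that makes this bijection work.
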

\begin{proof}
If we change the orientation of one edge then we can always change
the sign of the corresponding weight (which is an element of $H$).
\end{proof}

\subsection{Main result}

We state the main result of this section.

\begin{Thm} \label{thm:mainflow}
Given a labelled graph $(\mathcal{G},\ell)$ with $k$ connected
components and an admissible integer $q$,
$$
{\chi}_{\mathcal{G},\ell}^*(q)=\widetilde{\chi}_{\mathcal{G},\ell}^*(q)
:=(-1)^{|R|-|\overline{V}|+k}q^{|D|-|V|+|\overline{V}|}M_{\mathcal{G},\ell}(0,1-q).
$$
\end{Thm}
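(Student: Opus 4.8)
The plan is to mirror the strategy used for Theorem~\ref{thm:maincolor}: first establish a deletion–contraction recursion for the arithmetic flow polynomial $\chi^*_{\mathcal{G},\ell}(q)$ (which will be immediate from the definition, up to sign), then verify that the candidate polynomial $\widetilde{\chi}^*_{\mathcal{G},\ell}(q)$ satisfies the same recursion using Theorem~\ref{thm:properties}, and finally check the base case directly. Concretely, for a regular edge $e\in R_\theta$ I expect the relation
\[
\chi^*_{\mathcal{G},\ell}(q)=\chi^*_{\mathcal{G}/e,\ell}(q)-\chi^*_{\mathcal{G}-e,\ell}(q),
\]
since the $q$-flows of $\mathcal{G}/e$ that assign weight $0$ to $e$ are exactly those of $\mathcal{G}-e$ with the nowhere-zero condition waived on $e$; this is the standard deletion–contraction for flows, and the labels cause no trouble because the flow equations only involve $\ell(e)w(e)$. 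Note the roles of deletion and contraction are swapped compared with the coloring case, which is why the sign exponents in the statement are phrased via $|R|$ and $|D|$ rather than $|V|$; so some care with the combinatorial exponents is needed here.

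Next I would show $\widetilde{\chi}^*_{\mathcal{G},\ell}$ obeys the same recursion. Writing the claimed formula as $(-1)^{a}q^{b}M_{\mathcal{G},\ell}(0,1-q)$ with $a=|R|-|\overline V|+k$ and $b=|D|-|V|+|\overline V|$, I would split into the three cases according to whether the regular edge $e$ is proper, free, or torsion as a vector in $\mathfrak{M}_{\mathcal{G},\ell}$ (equivalently, whether the image of $e$ in $\overline{\mathcal{G}}$ lies in a circuit but is not a loop, lies in no circuit and is not a loop, or is a loop). In each case, apply the corresponding part of Theorem~\ref{thm:properties} to $M_{\mathcal{G},\ell}(x,y)$ at $(x,y)=(0,1-q)$, and track how the four quantities $|R|,|D|,|\overline V|,k$ change under deletion ($R$ loses $e$) and contraction ($D$ gains $e$), together with how the connectivity $k$ or the rank of $\overline{\mathcal{G}}$ shifts. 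The bookkeeping should produce exactly the factors $(x-1)=-1$, $(y-1)=-q$, $1$ appearing in parts (1)–(3) of Theorem~\ref{thm:properties}, matching the recursion for $\chi^*$. This reduces everything to the case $R=\emptyset$.

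For the base case $R=\emptyset$, one reduces to connected $\mathcal{G}$ via the direct-sum property (Theorem~\ref{thm:properties}(4) and Remark~\ref{rm:directsum}), exactly as in the proof of Theorem~\ref{thm:maincolor}. For a connected labelled graph with only dotted edges, a $q$-flow is any $w:D\to H$ with $u(v)=0$ for all $v$, i.e.\ an element of the kernel of the incidence-type operator $[X_D]^{t}\colon (\mathbb{Z}/q\mathbb{Z})^{|D|}\to(\mathbb{Z}/q\mathbb{Z})^{|V|}$ (there is no nowhere-zero constraint since $R=\emptyset$). I would count $|\ker[X_D]^t|$ by the same exact-sequence / Smith-normal-form argument as before: lift to $\mathbb{Z}$, append the columns $Q$, and use Remark~\ref{rm:GCD} together with admissibility ($\ell(e)\mid q$) to reduce the relevant minors to those supported on a spanning tree. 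This should yield $|\ker[X_D]^t| = q^{|D|-|V|+1}\cdot\gcd(\text{maximal minors of }[X_D])$, and since $\overline{\mathcal{G}}$ is a single vertex one checks $M_{\mathcal{G},\ell}(0,1-q)=q^{|D|-|V|+1}\,m(\emptyset)$ with $m(\emptyset)$ that same $\gcd$, giving equality with $\widetilde\chi^*$.

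The main obstacle I anticipate is the base case, not the recursion: the recursion is essentially formal once the exponent bookkeeping is done carefully, but in the $R=\emptyset$ case the rank of $[X_D]$ is only $|V|-1$ (the all-ones kernel vector), so $[X_D]^t$ is never surjective, and the Smith normal form / exact-sequence computation has to be set up so that the ``extra'' $q$'s from the corank are separated cleanly from the $\gcd$ of the spanning-tree minors. Getting the power of $q$ exactly right — reconciling $|D|-|V|+1$ from the kernel count with $|D|-|V|+|\overline V|$ in the theorem statement in the general (disconnected) reduction — is the delicate point, and admissibility of $q$ is exactly what makes the minor reduction valid (cf.\ Remark~\ref{rm:conditionflow}).
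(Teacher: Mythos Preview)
Your proposal is correct and follows essentially the same route as the paper: the deletion--contraction recursion $\chi^*_{\mathcal{G},\ell}=\chi^*_{\mathcal{G}/e,\ell}-\chi^*_{\mathcal{G}-e,\ell}$, the three-case bookkeeping via Theorem~\ref{thm:properties} to verify the same recursion for $\widetilde{\chi}^*$, the reduction to connected components via the direct sum, and the exact-sequence/minor computation for the base case $R=\emptyset$ are exactly what the paper does. One small slip: in the base case $X_R=\emptyset$ gives $M_{\mathcal{G},\ell}(0,1-q)=m(\emptyset)$, not $q^{|D|-|V|+1}m(\emptyset)$; the power of $q$ comes from the prefactor in $\widetilde{\chi}^*$, so your final equality is right once this is corrected.
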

\begin{Ex}
Consider $(\mathcal{G},\ell)$ with the orientation as in Example
\ref{ex:flow}.

Hence $X_D=\left\{(0,2,-2)\right\}$ $\subseteq \mathbb{Z}^3$ and
$X_R\equiv \left\{
\overline{(2,-2,0)},\overline{(-3,3,0)}\right\}\subseteq
G:=\mathbb{Z}^{3}/\langle X_D\rangle=\mathbb{Z}^{3}/\langle
(0,2,-2)\rangle$. An easy computation shows that
$M_{\mathcal{G},\ell}(x,y)=2x+6+2y$, and therefore
$$
\widetilde{\chi}_{\mathcal{G},\ell}^*(q)=-M_{\mathcal{G},\ell}(0,1-q)
=2q-8=\chi_{\mathcal{G},\ell}^*(q),
$$
as predicted.
\end{Ex}
\begin{Rem} \label{rm:conditionflow}
The admissibility condition on $q$ (i.e. $\ell(e)$ divides it for
all $e\in E$) is necessary: for example, consider
$(\mathcal{G},\ell)$, where $\mathcal{G}:=(V,E)$ with
$V:=\{v_1,v_2,v_3\}$, $R:=\emptyset$,
$D:=\{\{v_1,v_2\},\{v_2,v_3\}\}$ so that $E=R\cup D$, and
$\ell(\{v_1,v_2\})=2$, $\ell(\{v_2,v_3\})=6$.

For $q=2$, the conditions on the flows are trivially satisfied,
hence we have $2^2=4$ $2$-flows.

Recall from Remark \ref{rm:conditioncolor} the associated
arithmetic matroid for the fixed orientation
$E_{\theta}=\{(v_1,v_2)$, $(v_3,v_2)\}$ (see Figure 10).

\begin{figure}[h]
\includegraphics[width=60mm,clip=true,trim=10mm 230mm 30mm 10mm]{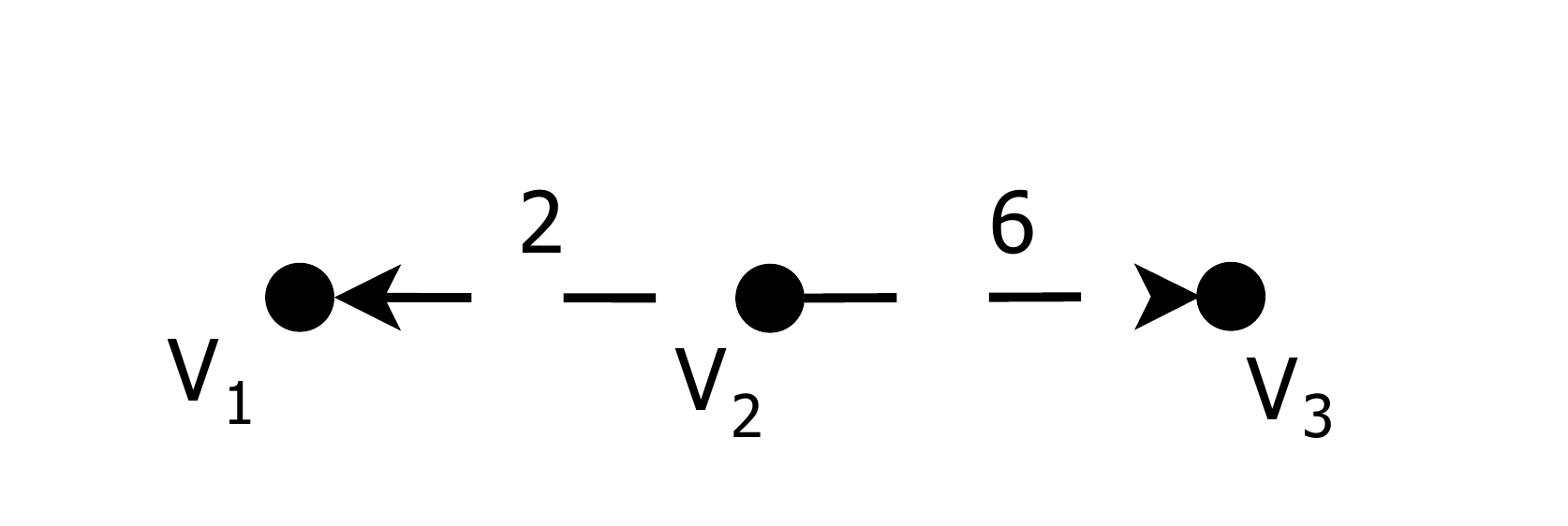}
\caption{The labelled graph $(\mathcal{G},\ell)$ with orientation
$E_{\theta}$.}
\end{figure}

We computed $M_{\mathcal{G},\ell}(x,y)=12$, therefore
$$
\widetilde{\chi}_{\mathcal{G},\ell}^*(q)=M_{\mathcal{G},\ell}(0,1-q)=12.
$$
But then
$$
\widetilde{\chi}_{\mathcal{G},\ell}^*(2)=12\neq
4=\chi_{\mathcal{G},\ell}^*(2).
$$
\end{Rem}
If $|D|=\emptyset$ and $\ell\equiv 1$, then we can identify the
graph $\mathcal{G}$ with $\overline{\mathcal{G}}$ (cf. Remark
\ref{rm:trivialflow}). In this case all the multiplicities in
$\mathfrak{M}_{\mathcal{G},\ell}$ are equal to $1$, therefore we
recover the following classical result of Tutte \cite{Tu} (cf.
also \cite[Proposition 6.3.4]{Wh}) as a special case of Theorem
\ref{thm:mainflow}.
\begin{Cor} \label{cor:flow}
We have
$$
{\chi}_{\mathcal{G}}^*(q)=(-1)^{|E|-|V|+k}T_{\mathcal{G}}(0,1-q),
$$
where ${\chi}_{\mathcal{G}}^*(q)$ is the flow polynomial of the
graph $\mathcal{G}=(V,E)$, $k$ is the number of connected
components of $\mathcal{G}$ and $T_{\mathcal{G}}(x,y)$ is the
associated Tutte polynomial.
\end{Cor}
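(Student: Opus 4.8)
The plan is to obtain Corollary \ref{cor:flow} as the specialization of Theorem \ref{thm:mainflow} to the case $D=\emptyset$ and $\ell\equiv 1$, so that essentially no new argument is needed beyond bookkeeping. First I would note that for a trivial labelling every positive integer $q$ is admissible, since $1\mid q$ always; hence Theorem \ref{thm:mainflow} applies to all $q$. Next I would record the combinatorial simplifications forced by $D=\emptyset$: we have $R=E$, no edges get contracted so $\overline{\mathcal{G}}=\mathcal{G}$ and $\overline{V}=V$, and $|D|=0$. Substituting into the exponents appearing in Theorem \ref{thm:mainflow}, the power of $q$ becomes $q^{|D|-|V|+|\overline{V}|}=q^{0-|V|+|V|}=q^{0}=1$, while the sign becomes $(-1)^{|R|-|\overline{V}|+k}=(-1)^{|E|-|V|+k}$.

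The one genuine identification to carry out is that $M_{\mathcal{G},\ell}(x,y)=T_{\mathcal{G}}(x,y)$ in this situation. By the construction of Section 2.4 the arithmetic matroid $\mathfrak{M}_{\mathcal{G},\ell}$ is built from the vectors $x_e\in\mathbb{Z}^n$, which for $\ell\equiv 1$ are precisely the signed incidence vectors of $\mathcal{G}$; these represent the cycle matroid of $\mathcal{G}$, so the underlying matroid is $\mathfrak{M}_{\mathcal{G}}$. Moreover, by Remark \ref{rm:GCD} (or directly from Example \ref{ex:arithmatroid}) every multiplicity $m(A)$ equals $1$, since the subgroup of $\mathbb{Z}^{n}$ spanned by any subforest is a direct summand. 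Comparing the defining formula \eqref{eq:aritutte} with the definition of the (classical) Tutte polynomial, the collapse $m(A)\equiv 1$ yields
$$M_{\mathcal{G},\ell}(x,y)=\sum_{A\subseteq E}(x-1)^{rk(E)-rk(A)}(y-1)^{|A|-rk(A)}=T_{\mathcal{G}}(x,y).$$

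Finally I would assemble the pieces: by Remark \ref{rm:trivialflow} the arithmetic $q$-flows of $(\mathcal{G},\ell)$ are exactly the usual nowhere-zero $q$-flows of $\mathcal{G}$, so $\chi_{\mathcal{G},\ell}^*(q)=\chi_{\mathcal{G}}^*(q)$; combining this with the simplified exponents and $M_{\mathcal{G},\ell}=T_{\mathcal{G}}$, Theorem \ref{thm:mainflow} reads
$$\chi_{\mathcal{G}}^*(q)=(-1)^{|E|-|V|+k}\,T_{\mathcal{G}}(0,1-q),$$
which is precisely the claimed formula. The only step demanding any care — and it is mild — is the matroid identification together with the vanishing of all multiplicities; everything else is the substitution of the trivial data ($D=\emptyset$, $\ell\equiv 1$) into Theorem \ref{thm:mainflow}.
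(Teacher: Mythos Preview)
Your proposal is correct and follows exactly the paper's approach: specialize Theorem \ref{thm:mainflow} to $D=\emptyset$, $\ell\equiv 1$, observe that the exponents simplify and that all multiplicities equal $1$ so that $M_{\mathcal{G},\ell}=T_{\mathcal{G}}$, and conclude. (Minor wording slip: at the end you say ``vanishing of all multiplicities'' where you mean $m(A)\equiv 1$.)
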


\section{Proof of Theorem \ref{thm:mainflow}}

To prove Theorem \ref{thm:mainflow} we need the following lemma.
\begin{Lem}
Let $(\mathcal{G},\ell)$ be a labelled graph and let $q$ be an
admissible integer. For a regular edge $e\in R$ we have
$$
{\chi}_{\mathcal{G,\ell}}^*(q)={\chi}_{\mathcal{G}/e,\ell}^*(q)-
{\chi}_{\mathcal{G}- e,\ell}^*(q).
$$
\end{Lem}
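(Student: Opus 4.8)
The plan is to prove the deletion-contraction recursion for the arithmetic flow polynomial directly from the definition of a $q$-flow, mimicking the classical argument. Fix an orientation $\theta$ and a regular edge $e\in R$. I would partition the set of arithmetic $q$-flows of $(\mathcal{G},\ell)$ according to what they do on $e$. Dropping the constraint $w(e)\neq 0$ gives what I will call the set of \emph{flows that may vanish on} $e$; restricting such a function to $E\setminus\{e\}$ gives a bijection with the arithmetic $q$-flows of $(\mathcal{G}/e,\ell_2)$, because contracting $e$ (i.e.\ moving it from $R$ to $D$) removes exactly the nowhere-zero condition at $e$ while keeping all the vertex balance equations; note that in $(\mathcal{G}/e,\ell_2)$ the edge $e$ is now dotted, so no inequality is imposed there, and the labels and hence the linear forms $u(v)$ are unchanged. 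On the other hand, the flows of $(\mathcal{G},\ell)$ with $w(e)=0$ are in bijection with the arithmetic $q$-flows of $(\mathcal{G}-e,\ell_1)$, since setting $w(e)=0$ makes $e$ invisible to every balance equation, and deleting $e$ from $R$ removes both its contribution to the $u(v)$'s and its nowhere-zero constraint. Combining the two bijections yields
\[
{\chi}_{\mathcal{G}/e,\ell}^*(q)={\chi}_{\mathcal{G},\ell}^*(q)+{\chi}_{\mathcal{G}-e,\ell}^*(q),
\]
which is the claimed identity after rearranging.

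The key steps, in order, are: (i) write down the linear system defining a $q$-flow of $(\mathcal{G},\ell)$ in $H=\mathbb{Z}/q\mathbb{Z}$, namely $u(v)=0$ for all $v\in V$ together with $w(e')\neq 0$ for $e'\in R_\theta$; (ii) observe that the subsystem obtained by erasing the single inequality at $e$ is \emph{exactly} the system defining a $q$-flow of $(\mathcal{G}/e,\ell)$, giving the first bijection; (iii) observe that imposing $w(e)=0$ in the original system yields exactly the system for a $q$-flow of $(\mathcal{G}-e,\ell)$, giving the second bijection; (iv) use that these two sub-cases partition all flows of $(\mathcal{G},\ell)$ to add the counts and rearrange. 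All of this is really just bookkeeping with the definition of $u(v)$ and with which inequalities are in force; the admissibility of $q$ plays no role in \emph{this} lemma (it will be needed later, when relating ${\chi}^*$ to the arithmetic Tutte polynomial), and connectivity is irrelevant here.

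I do not expect a serious obstacle: the only thing to be careful about is making sure that contraction in the sense of labelled graphs (turning $e\in R$ into a dotted edge, keeping $\ell$ unchanged) does not alter the forms $u(v)$ — it does not, because $u(v)$ is defined by summing $\ell(e')w(e')$ over all edges $e'\in E_\theta$ incident to $v$, regardless of whether they are regular or dotted — and that the nowhere-zero constraint is imposed only on \emph{regular} edges, so moving $e$ to $D$ genuinely just drops that one constraint. Once these two observations are cleanly stated, the lemma follows immediately; this is why the excerpt simply says such a recursion is available before the proof. The one-line proof I would write is: ``Flows of $(\mathcal{G},\ell)$ with $w(e)=0$ are exactly flows of $(\mathcal{G}-e,\ell)$, and flows of $(\mathcal{G},\ell)$ together with those having $w(e)=0$ form all flows of $(\mathcal{G}/e,\ell)$ (the latter being obtained by dropping the constraint $w(e)\neq 0$); summing and rearranging gives the claim.''
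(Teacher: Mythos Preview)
Your argument is correct and is exactly the paper's approach: the paper's entire proof is the one-line observation that deleting $e$ amounts to imposing $w(e)=0$, so that the flows of $(\mathcal{G}/e,\ell)$ split into those with $w(e)\neq 0$ (the flows of $(\mathcal{G},\ell)$) and those with $w(e)=0$ (the flows of $(\mathcal{G}-e,\ell)$). One small wording fix: in $(\mathcal{G}/e,\ell_2)$ the edge set is still $E$ (with $e$ moved to $D$), so the bijection with ``flows that may vanish on $e$'' is the identity, not restriction to $E\setminus\{e\}$; and the phrase ``flows of $(\mathcal{G},\ell)$ with $w(e)=0$'' should read ``flows of $(\mathcal{G}/e,\ell)$ with $w(e)=0$,'' since genuine flows of $(\mathcal{G},\ell)$ never have $w(e)=0$.
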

\begin{proof}
It is enough to observe that the deletion of $e$ corresponds to
impose the equation $w(e)=0\in H$. Then the result is clear from
the definitions.
\end{proof}

We want to prove that our polynomial
$\widetilde{{\chi}}_{\mathcal{G},\ell}^*(q)$ satisfies the same
recursion.
\begin{Lem}
Let $(\mathcal{G},\ell)$ be a labelled graph and let $q$ be an
admissible integer. For a regular edge $e\in R$ we have
$$
\widetilde{\chi}_{\mathcal{G,\ell}}^*(q)=\widetilde{\chi}_{\mathcal{G}/e,\ell}^*(q)
-\widetilde{\chi}_{\mathcal{G}- e,\ell}^*(q).
$$
\end{Lem}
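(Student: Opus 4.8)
The plan is to mimic the proof of the analogous lemma for the chromatic polynomial: substitute the definition of $\widetilde{\chi}^*$ in terms of the arithmetic Tutte polynomial, apply the appropriate part of Theorem \ref{thm:properties} depending on the type of the edge $e$ viewed inside the arithmetic matroid, and then carefully track how the exponents $|R|$, $|D|$, $|\overline{V}|$, $|V|$ and $k$ change under deletion and contraction of $e$ so that the signs and powers of $q$ reassemble into $\widetilde{\chi}^*_{\mathcal{G}/e,\ell}(q)-\widetilde{\chi}^*_{\mathcal{G}-e,\ell}(q)$.

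First I would note the bookkeeping that is independent of the case. Write $n=|V|$, and let $\mathcal{G}-e$ have parameters $R_1=R\setminus\{e\}$, $D_1=D$, $\overline{V}_1$, $k_1$, and let $\mathcal{G}/e$ have $R_2=R\setminus\{e\}$, $D_2=D\cup\{e\}$, $\overline{V}_2$, $k_2$; in all cases $V$ is unchanged. Here $\mathcal{G}-e$ is the deletion of $e$ in $\mathfrak{M}_{\mathcal{G},\ell}$ and $\mathcal{G}/e$ is the contraction, in the sense of Section 2.4: the contraction of the labelled graph moves $e$ from $R$ to $D$, which is exactly contraction of the corresponding vector $x_e$ in the arithmetic matroid. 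Note $|R_1|=|R_2|=|R|-1$ and $|D_2|=|D|+1$, $|D_1|=|D|$. Since $\overline{\mathcal{G}-e}$ is obtained by classically contracting $D_1=D$, we have $\overline{V}_1=\overline{V}$ and $k_1=k$ always. The behavior of $\overline{\mathcal{G}/e}$ (classical contraction of $D\cup\{e\}$) and of the quantity $|V|-|\overline{V}|+|D|-\mathrm{rk}$ is what distinguishes the three cases.

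Then I would split into the three cases according to the type of the vector $x_e$ in $\mathfrak{M}_{\mathcal{G},\ell}=\mathfrak{M}_{X_R}$, i.e. whether the edge corresponding to $e$ in $\overline{\mathcal{G}}$ is a proper edge (in a circuit, not a loop), a free edge (not in any circuit, not a loop), or a torsion edge (a loop in $\overline{\mathcal{G}}$). In the proper case one uses Theorem \ref{thm:properties} (1): $M_{\mathcal{G},\ell}(0,1-q)=M_{\mathcal{G}-e,\ell}(0,1-q)+M_{\mathcal{G}/e,\ell}(0,1-q)$, and here $\overline{V}_2=\overline{V}-1$ while $k_2=k$, so the exponent of $q$ is $|D|-|V|+|\overline{V}|=(|D|+1)-|V|+(|\overline{V}|-1)$, matching the contraction term, and the sign $(-1)^{|R|-|\overline{V}|+k}$ matches $-(-1)^{|R_1|-|\overline{V}_1|+k_1}=(-1)^{|R|-1-|\overline{V}|+k}$ for the deletion term. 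In the free case one uses Theorem \ref{thm:properties} (2): $M_{\mathcal{G},\ell}(0,1-q)=(-1)M_{\mathcal{G}-e,\ell}(0,1-q)+M_{\mathcal{G}/e,\ell}(0,1-q)$ (since $x=0$ gives the factor $x-1=-1$), and now contracting $e$ in $\overline{\mathcal{G}}$ drops both a vertex and a component of $\overline{\mathcal{G}}$ appropriately — more precisely $\overline{V}_2=\overline{V}-1$ but $\mathrm{rk}$ of the matroid drops, reflected in $|D|-|V|+|\overline{V}|$; the extra factor $(-1)$ together with the shift in $|R|$ reproduces exactly the claimed recursion. In the torsion case one uses Theorem \ref{thm:properties} (3): $M_{\mathcal{G},\ell}(0,1-q)=M_{\mathcal{G}-e,\ell}(0,1-q)+(1-q-1)M_{\mathcal{G}/e,\ell}(0,1-q)=M_{\mathcal{G}-e,\ell}(0,1-q)-qM_{\mathcal{G}/e,\ell}(0,1-q)$, and here $\overline{V}_2=\overline{V}$, $k_2=k$, so the extra power of $q$ is absorbed by the change $|D_2|-|V|+|\overline{V}_2|=(|D|+1)-|V|+|\overline{V}|$, and the sign works out so that the $-qM_{\mathcal{G}/e}$ becomes $+\widetilde{\chi}^*_{\mathcal{G}/e,\ell}(q)$ while $M_{\mathcal{G}-e}$ becomes $-\widetilde{\chi}^*_{\mathcal{G}-e,\ell}(q)$.

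The main obstacle will be getting the signs and exponents to line up correctly in the free and torsion cases, since unlike the chromatic case the flow normalization involves the four quantities $|R|$, $|D|$, $|V|$, $|\overline{V}|$ simultaneously and these shift in opposite directions under deletion versus contraction; in particular one must check carefully in the free case that $|D|-|V|+|\overline{V}|$ is unchanged by deletion (so the power of $q$ in $\widetilde{\chi}^*_{\mathcal{G}-e}$ matches) while it decreases by one under contraction, and that the spurious sign from the $(x-1)$-factor at $x=0$ is precisely cancelled by the $(-1)$ coming from $|R|-1$ in the exponent. Once the identities $\overline{V}_1=\overline{V}$, $k_1=k$, $|R_i|=|R|-1$, $|D_1|=|D|$, $|D_2|=|D|+1$ are in hand, each case is a one-line substitution followed by a comparison of exponents, exactly parallel to the three displayed computations in the proof of the chromatic recursion lemma above.
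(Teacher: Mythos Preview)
Your approach is exactly the one in the paper: split into the three cases (proper, free, torsion), apply the corresponding part of Theorem~\ref{thm:properties} at $(x,y)=(0,1-q)$, and track the parameters $|R|,|D|,|\overline{V}|,k$ under deletion and contraction. The proper and torsion cases you sketch are fine.

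There is, however, a genuine slip in your bookkeeping that breaks the free case as you have written it. You assert that ``$\overline{V}_1=\overline{V}$ and $k_1=k$ always''. The first equality is correct, but the second fails precisely when $e$ is free: a free edge is a bridge in $\overline{\mathcal{G}}$ (hence in $\mathcal{G}$), so deleting it increases the number of connected components by one, i.e.\ $k_1=k+1$. This is exactly how the paper handles Case~2. With your claim $k_1=k$, the sign on the deletion term in the free case would be
\[
-(-1)^{|R_1|-|\overline{V}_1|+k_1}=-(-1)^{(|R|-1)-|\overline{V}|+k}=(-1)^{|R|-|\overline{V}|+k},
\]
whereas the left-hand side contributes $(-1)\cdot(-1)^{|R|-|\overline{V}|+k}$ from the factor $(x-1)|_{x=0}=-1$; these do not match. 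With the correct $k_1=k+1$ the exponent shifts by one more and the identity goes through. Relatedly, your remark that the power of $q$ ``decreases by one under contraction'' in the free case is off: since $|D_2|=|D|+1$ and $|\overline{V}_2|=|\overline{V}|-1$, one has $|D_2|-|V|+|\overline{V}_2|=|D|-|V|+|\overline{V}|$, so the $q$-exponent is unchanged for the contraction term in all three cases. Once you correct $k_1$ in the free case, your argument is identical to the paper's.
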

\begin{proof}
We distinguish three cases.

\underline{Case 1}: $e$ is a proper edge, i.e. the corresponding
edge in $\overline{\mathcal{G}}$ is not a loop and it is contained
in a circuit. Then, applying Theorem \ref{thm:properties} (1), we
have
\begin{eqnarray*}
\widetilde{\chi}_{\mathcal{G,\ell}}^*(q) & = &
(-1)^{|R|-|\overline{V}|+k}q^{|D|-|V|+|\overline{V}|}M_{X_R}(0,1-q)\\
 & = & (-1)^{|R|-|\overline{V}|+k}q^{|D|-|V|+|\overline{V}|}(M_{X_{R_2}}(0,1-q)+M_{X_{R_1}}(0,1-q))\\
 & = &
 (-1)^{|R_2|-|\overline{V}_2|+k}q^{|D_2|-|V|+|\overline{V}_2|} M_{X_{R_2}}(0,1-q)\\
 & - & (-1)^{|R_1|-|\overline{V}_1|+k} q^{|D_1|-|V|+|\overline{V}_1|}M_{X_{R_1}}(0,1-q)\\
 & = & \widetilde{\chi}_{\mathcal{G}/e,\ell}^*(q)-\widetilde{\chi}_{\mathcal{G}- e,\ell}^*(q),
\end{eqnarray*}
since $|R_1|=|R_2|=|R|-1$, $|D_1|=|D|$, $|D_2|=|D|+1$,
$|\overline{V}_1|=|\overline{V}|$ and
$|\overline{V}_2|=|\overline{V}|-1$.

\underline{Case 2}: $e$ is a free edge, i.e. the corresponding
edge in $\overline{\mathcal{G}}$ is not contained in a circuit and
is not a loop. Then, applying Theorem \ref{thm:properties} (2), we
have
\begin{eqnarray*}
\widetilde{\chi}_{\mathcal{G,\ell}}^*(q) & = &
(-1)^{|R|-|\overline{V}|+k}q^{|D|-|V|+|\overline{V}|}M_{X_R}(0,1-q)\\
 & = & (-1)^{|R|-|\overline{V}|+k}q^{|D|-|V|+|\overline{V}|}(M_{X_{R_2}}(0,1-q)-M_{X_{R_1}}(0,1-q))\\
 & = &
 (-1)^{|R_2|-|\overline{V}_2|+k}q^{|D_2|-|V|+|\overline{V}_2|}M_{X_{R_2}}(0,1-q)\\
 & - & (-1)^{|R_1|-|\overline{V}_1|+(k+1)}q^{|D_1|-|V|+|\overline{V}_1|}M_{X_{R_1}}(0,1-q)\\
 & = & \widetilde{\chi}_{\mathcal{G}/e,\ell}^*(q)
-\widetilde{\chi}_{\mathcal{G}- e,\ell}^*(q),
\end{eqnarray*}
since $\mathcal{G}- e$ has now an extra connected component,
$|D_1|=|D|$, $|D_2|=|D|+1$, $|R_1|=|R_2|=|R|-1$,
$|\overline{V}_1|=|\overline{V}|$ and
$|\overline{V}_2|=|\overline{V}|-1$.

\underline{Case 3}: $e$ is a torsion edge, i.e. the corresponding
edge in $\overline{\mathcal{G}}$ is a loop. Then, applying Theorem
\ref{thm:properties} (3), we have
\begin{eqnarray*}
\widetilde{\chi}_{\mathcal{G,\ell}}^*(q) & = &
(-1)^{|R|-|\overline{V}|+k}q^{|D|-|V|+|\overline{V}|}M_{X_R}(0,1-q)\\
 & = & (-1)^{|R|-|\overline{V}|+k}q^{|D|-|V|+|\overline{V}|}(-qM_{X_{R_2}}(0,1-q)+M_{X_{R_1}}(0,1-q))\\
 & = & (-1)^{|R_2|-|\overline{V}_2|+k}q^{|D_2|-|V|+|\overline{V}_2|}M_{X_{R_2}}(0,1-q)\\
 & - & (-1)^{|R_1|-|\overline{V}_1|+k}q^{|D_1|-|V|+|\overline{V}_1|}M_{X_{R_1}}(0,1-q)\\
 & = & \widetilde{\chi}_{\mathcal{G}/e,\ell}^*(q)
-\widetilde{\chi}_{\mathcal{G}- e,\ell}^*(q),
\end{eqnarray*}
since $|D_1|=|D|$, $|D_2|=|D|+1$, $|R_1|=|R_2|=|R|-1$,
$|\overline{V}_1|=|\overline{V}|$ and
$|\overline{V}_2|=|\overline{V}|$.
\end{proof}

In this way we reduce the proof of Theorem \ref{thm:mainflow} to
the case where there are no regular edges. For this case, first of
all we reduce ourself to the case of a connected graph: suppose
that our graph $\mathcal{G}$ has $k$ connected components
$\mathcal{G}^{(1)},\mathcal{G}^{(2)},\dots,\mathcal{G}^{(k)}$ with
the corresponding labellings
$\ell^{(1)},\ell^{(2)},\dots,\ell^{(k)}$. In this case the matroid
$\mathfrak{M}_{\mathcal{G},\ell}$ is the direct sum of the
matroids $\mathfrak{M}_{\mathcal{G}^{(1)},\ell^{(1)}},
\mathfrak{M}_{\mathcal{G}^{(2)},\ell^{(2)}},\dots,
\mathfrak{M}_{\mathcal{G}^{(k)},\ell^{(k)}}$ (cf. Remark
\ref{rm:directsum}). Since $\overline{\mathcal{G}^{(i)}}$ consists
of a single vertex with no edges for $i=1,2,\dots,k$, we have
$|\overline{V}|=k$. We denote by $D^{(i)}$ and $V^{(i)}$ the set
of dotted edges and vertices respectively of $\mathcal{G}^{(i)}$.
Therefore, assuming the result for a connected graph, we have
\begin{eqnarray*}
(-1)^{|R|-|\overline{V}|+k}q^{|D|-|V|+|\overline{V}|}M_{\mathcal{G},\ell}(0,1-q)
& = &
q^{|D|-|V|+k}M_{\mathcal{G},\ell}(0,1-q)\\
\text{(by Theorem \ref{thm:properties} (4))} & = &
q^{|D|-|V|+k}\cdot \prod_{i=1}^k
M_{\mathcal{G}^{(i)},\ell^{(i)}}(0,1-q)\\
 & = & \prod_{i=1}^k q^{|D^{(i)}|-|V^{(i)}|+1}M_{\mathcal{G}^{(i)},\ell^{(i)}}(0,1-q)\\
\text{(by assumption on connected graphs)} & = & \prod_{i=1}^k
{\chi}_{\mathcal{G}^{(i)},\ell^{(i)}}^*(q)=
{\chi}_{\mathcal{G},\ell}^*(q),
\end{eqnarray*}
where the last equality is clear from the definition of arithmetic
flow polynomial.

So we are left to prove the following lemma.
\begin{Lem}
Let $(\mathcal{G},\ell)$ be a labelled connected graph with no
regular edges and let $q$ be an admissible integer. We have
$$
{\chi}_{\mathcal{G},\ell}^*(q)=\widetilde{{\chi}}_{\mathcal{G},\ell}^*(q).
$$
\end{Lem}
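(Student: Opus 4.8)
The plan is to compute both sides directly for a connected labelled graph $(\mathcal{G},\ell)$ with no regular edges, i.e.\ $E=D$, $R=\emptyset$. In this case, since $\overline{\mathcal{G}}$ is a single vertex, we have $|\overline{V}|=1$ and $k=1$, so the claimed formula reads $\widetilde{\chi}^*_{\mathcal{G},\ell}(q)=q^{|D|-|V|+1}M_{\mathcal{G},\ell}(0,1-q)$. But with $X_R=\emptyset$ the arithmetic Tutte polynomial degenerates to $M_{\mathcal{G},\ell}(x,y)=m(\emptyset)$, a constant, where $m(\emptyset)$ is the order of the torsion subgroup of $G=\mathbb{Z}^{|V|}/\langle X_D\rangle$; equivalently, by Remark~\ref{rm:GCD}, it is the GCD of the rank-$(|V|-1)$ nonzero minors of the matrix $[X_D]$ (rank $|V|-1$ because $\mathcal{G}$ is connected). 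Hence $\widetilde{\chi}^*_{\mathcal{G},\ell}(q)=q^{|D|-|V|+1}\cdot m(\emptyset)$, and it remains to show that the number of arithmetic $q$-flows equals this quantity.

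Next I would set up the flow count group-theoretically, paralleling the proof of the previous lemma (the chromatic case). An arithmetic $q$-flow is a weight vector $w\in(\mathbb{Z}/q\mathbb{Z})^{|D|}$ satisfying $u(v)=0$ for every vertex $v$; writing $[X_D]$ for the incidence-type matrix whose columns are the vectors $x_e\in\mathbb{Z}^{|V|}$, the vertex conditions say exactly that $[X_D]\,w=0$ in $(\mathbb{Z}/q\mathbb{Z})^{|V|}$ (there is no nowhere-zero restriction since $R_\theta=\emptyset$). So I must count the kernel of the transpose map
$$
[X_D]^{T}:(\mathbb{Z}/q\mathbb{Z})^{|D|}\longrightarrow(\mathbb{Z}/q\mathbb{Z})^{|V|},
$$
i.e.\ $\chi^*_{\mathcal{G},\ell}(q)=|\mathrm{Ker}\,[X_D]^{T}|$. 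As in the chromatic proof, I would run the exact sequence
$$
0\to\mathrm{Ker}\,[X_D]^{T}\to(\mathbb{Z}/q\mathbb{Z})^{|D|}\to(\mathbb{Z}/q\mathbb{Z})^{|V|}\to\frac{(\mathbb{Z}/q\mathbb{Z})^{|V|}}{\mathrm{Im}\,[X_D]^{T}}\to 0,
$$
giving $|\mathrm{Ker}\,[X_D]^{T}|=q^{|D|-|V|}\cdot\big|(\mathbb{Z}/q\mathbb{Z})^{|V|}:\mathrm{Im}\,[X_D]^{T}\big|$, and then by the isomorphism theorem rewrite the index as $|\mathbb{Z}^{|V|}:\mathrm{Im}[X_D^{T}\cup Q]|$ where $Q$ is the scaled-identity block ($q$ times each standard basis vector), which by Remark~\ref{rm:GCD} is the GCD of the maximal ($|V|\times|V|$) minors of the augmented matrix.

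The final step is the minor computation, which is where the admissibility hypothesis and the Smith normal form enter. I expect the main obstacle here to be bookkeeping the exponent of $q$ correctly: the rank of $[X_D]$ is only $|V|-1$, so the image of $[X_D]^{T}$ already lies in the index-$q$ sublattice cut out by the all-ones functional on connected components, contributing one extra factor of $q$ beyond the naive count; the admissibility condition $\ell(e)\mid q$ is exactly what guarantees that every maximal minor of the augmented matrix using $\geq 2$ columns of $Q$ in a nontrivial way is a multiple of the ``good'' minors, so that the GCD equals $q^{|D|-|V|+1}$ times the GCD of the rank-$(|V|-1)$ minors of $[X_D]$ alone — the latter spanning-tree minors being precisely $m(\emptyset)$. (The product-of-labels computation for a spanning tree, carried out in the chromatic proof, shows directly that these minors are nonzero and confirms the spanning-tree interpretation.) Assembling everything,
$$
\chi^*_{\mathcal{G},\ell}(q)=q^{|D|-|V|}\cdot q^{|D|-|V|+1}\cdot m(\emptyset)\cdot q^{-(|D|-|V|)} = q^{|D|-|V|+1}\,m(\emptyset)=\widetilde{\chi}^*_{\mathcal{G},\ell}(q),
$$
where I would double-check the arithmetic of the $q$-exponents against the chromatic case to make sure the shift by $1$ from the rank defect is accounted for exactly once; this exponent check is the delicate point, the rest being a transposed copy of the argument already given for the arithmetic chromatic polynomial.
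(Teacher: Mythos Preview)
Your approach is exactly the paper's: reduce $\widetilde{\chi}^*$ to $q^{|D|-|V|+1}m(\emptyset)$, identify the flow count with $|\mathrm{Ker}\,[X_D]|$ for the map $(\mathbb{Z}/q\mathbb{Z})^{|D|}\to(\mathbb{Z}/q\mathbb{Z})^{|V|}$, run the exact sequence, and compute the index $\bigl|\mathbb{Z}^{|V|}:\mathrm{Im}[X_D\cup Q]\bigr|$ via maximal minors and admissibility.

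The one concrete slip is in the minor count, and your own final assembly shows it: the augmented matrix has $|V|$ rows, so its maximal minors are $|V|\times|V|$. Since $[X_D]$ has rank $|V|-1$, a nonzero maximal minor needs \emph{exactly one} column from $Q$ (not $|D|-|V|+1$), together with a spanning tree's worth of $X_D$-columns. Hence the index is $q\cdot m(\emptyset)$, not $q^{|D|-|V|+1}\cdot m(\emptyset)$; you have imported the chromatic-case exponent, where the target was $\mathbb{Z}^{|D|}$ and one needed $|D|-|V|+1$ columns from $Q$. With the correct index the computation is simply
\[
|\mathrm{Ker}\,[X_D]| \;=\; q^{|D|-|V|}\cdot q\cdot m(\emptyset)\;=\;q^{|D|-|V|+1}m(\emptyset),
\]
and the unexplained correction factor $q^{-(|D|-|V|)}$ in your last display should be deleted. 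The admissibility argument you sketch (minors using $\geq 2$ columns of $Q$ are multiples of the ``good'' ones because every $\ell(e)\mid q$) is right and is exactly how the paper justifies restricting attention to spanning-tree-plus-one-$Q$-column minors.
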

\begin{proof}
First of all, if we set $m:=|E|=|D|$ and $n:=|V|$, notice that
$$
\widetilde{{\chi}}_{\mathcal{G},\ell}^*(q)=q^{m-n+1}\cdot
M_{\mathcal{G},\ell}(0,1-q) =q^{m-n+1}\cdot m(\emptyset),
$$
where $m(\emptyset)$ is the cardinality of the torsion subgroup of
the group $\mathbb{Z}^n/\langle X_D\rangle$, i.e. the GCD of the
maximal rank nonzero minors of the matrix $[X_D]$ whose columns
are the elements of $X_D$ (cf. Remark \ref{rm:GCD}).

To compute the number of arithmetic $q$-flows, observe that
$m=|X_D|$, and $m\geq n-1$ (we are assuming that $\mathcal{G}$ is
connected). We can look at $[X_D]$ as a linear operator (acting on
the left)
$$
[X_D]:(\mathbb{Z}/q\mathbb{Z})^m\rightarrow
(\mathbb{Z}/q\mathbb{Z})^n.
$$
Then the elements of the kernel correspond naturally to the
arithmetic $q$-flows, hence we need to compute the cardinality of
$Ker [X_D]$. Looking at the exact sequence (i.e. the image of each
homomorphism is the kernel of the successive one)
$$
0 \rightarrow Ker [X_D]\mathop{\hookrightarrow}^{\iota}
(\mathbb{Z}/q\mathbb{Z})^m \mathop{\longrightarrow}^{[X_D]}
(\mathbb{Z}/q\mathbb{Z})^n \mathop{\longrightarrow}^{\pi}
\frac{(\mathbb{Z}/q\mathbb{Z})^n}{Im [X_D]}\rightarrow 0,
$$
where $\iota$ and $\pi$ are the inclusion and the natural map to
the quotient respectively, we realize that
$(\mathbb{Z}/q\mathbb{Z})^m/Ker [X_D]\cong Im [X_D]$ implies
$$
\frac{|(\mathbb{Z}/q\mathbb{Z})^n|}{|Im
[X_D]|}=\frac{|(\mathbb{Z}/q\mathbb{Z})^n|}{|(\mathbb{Z}/q\mathbb{Z})^m|}\cdot
|Ker [X_D]|= q^{n-m} \cdot |Ker [X_D]|.
$$
Now by the isomorphism theorem
$$
|(\mathbb{Z}/q\mathbb{Z})^n:Im [X_D]|=|\mathbb{Z}^n:Im [X_D\cup
Q]|,
$$
where $Q=\{q_1,q_2,\dots,q_n\}$, $q_i$ is the vector of
$\mathbb{Z}^n$ with $1$ in the $i$-th position and $0$ elsewhere,
and we see the matrix $[X_D\cup Q]$ (whose columns are the
elements of $X_D$ and of $Q$) as a linear operator (acting on the
left)
$$
[X_D\cup Q]:\mathbb{Z}^{n+m}\rightarrow \mathbb{Z}^{n}.
$$
But when we compute the right-hand side of the last equation we
get $q\cdot m(\emptyset)$. In fact by Remark \ref{rm:GCD} this the
$GCD$ of the minors of maximal rank of $[X_D\cup Q]$. But, since
$\ell(e)$ divides $q$ for every edge $e\in E$ ($=D$ in this case),
it is enough to compute the minors of maximal rank which involve
the edges of a spanning tree plus an extra columns from $Q$ (the
other ones are multiples of these). Therefore we get $q$ times the
$GCD$ of minors of rank $n-1$ in $[X_D]$, which are the ones
corresponding to spanning trees. But this last number is exactly
$m(\emptyset)$.

From all this we get
$$
\chi_{\mathcal{G},\ell}^*(q)=|Ker([X_D])|=q^{m-n+1}\cdot
m(\emptyset)=q^{m-n+1}\cdot
M_{\mathcal{G},\ell}(0,1-q)=\widetilde{\chi}_{\mathcal{G},\ell}^*(q),
$$
as we wanted.
\end{proof}

\section{Final comments}

Recall that the \textit{dual} of a matroid $\mathfrak{M}_X=(X,rk)$
is the matroid $\mathfrak{M}_{X}^*=(X,rk^*)$ on $X$ whose rank
function $rk^*$ is given by the formula
$$
rk^*(A):=|X|-rk(X)+rk(X\setminus A)
$$
for all $A\subseteq X$.

If we denote by $T(\mathfrak{M}_X;x,y)$ and
$T(\mathfrak{M}_{X}^*;x,y)$ the Tutte polynomial of
$\mathfrak{M}_X$ and $\mathfrak{M}_{X}^*$ respectively, then we
have the easy relation
$$
T(\mathfrak{M}_{X}^*;x,y)=T(\mathfrak{M}_X;y,x).
$$

Consider a graph $\mathcal{G}$ whose associated matroid is
$\mathfrak{M}_{\mathcal{G}}$ and suppose now that the dual matroid
$\mathfrak{M}_{\mathcal{G}}^*$ is realized by another graph
$\mathcal{G}^*$. In this case $\mathcal{G}^*$ is called a
\textit{dual graph} of $\mathcal{G}$.

It turns out that the graphs $\mathcal{G}$ that admit such a dual
are exactly the \textit{planar graphs}, for which a dual graph is
still planar and it can be constructed explicitly starting from
the graph and its planar embedding (see \cite[Section 1.8]{GR}).

These remarks together with Corollaries \ref{cor:color} and
\ref{cor:flow} suggest that there should be a sort of duality
between the colorings of $\mathcal{G}$ and the flows of
$\mathcal{G}^*$.

Similarly, the \textit{dual} of an arithmetic matroid
$(\mathfrak{M}_X,m)$ is simply the arithmetic matroid
$(\mathfrak{M}_{X}^*,m^*)$, where $m^*(A):=m(X\setminus A)$ for
all $A\subseteq X$ (see \cite{DM2} for the proofs of the
statements in this discussion).

If we denote by $M(\mathfrak{M}_X;x,y)$ and
$M(\mathfrak{M}_{X}^*;x,y)$ the arithmetic Tutte polynomial of
$(\mathfrak{M}_X,m)$ and $(\mathfrak{M}_{X}^*,m^*)$ respectively,
then we have the easy relation
$$
M(\mathfrak{M}_{X}^*;x,y)=M(\mathfrak{M}_X;y,x).
$$

Consider a labelled graph $(\mathcal{G},\ell)$ whose associated
arithmetic matroid is $\mathfrak{M}_{\mathcal{G},\ell}$ and
suppose now that the dual matroid
$\mathfrak{M}_{\mathcal{G},\ell}^*$ is realized by another
labelled graph $(\mathcal{G}^*,\ell^*)$, which we call a
\textit{dual} of $(\mathcal{G},\ell)$.

These remarks together with Theorems \ref{thm:maincolor} and
\ref{thm:mainflow} suggest that there should be a sort of duality
between the arithmetic colorings of $(\mathcal{G},\ell)$ and the
arithmetic flows of $(\mathcal{G}^*,\ell^*)$.

It is now natural to formulate the following vague problem, that
we leave open.

\begin{Pb}
Find a topological-arithmetic characterization (and eventually an
explicit construction) of the dual of a labelled graph.
\end{Pb}
We finally remark that for passing from the labelled graph to its
arithmetic matroid we went through a list of elements in a
finitely generated abelian group. For these in \cite{DM2} we
proved (giving an explicit construction) that the dual matroid
arise again from such a list.

\end{document}